\newcommand{\isom}{\cong}
\newcommand{\FF}{\mathbb{F}}
\newcommand{\PP}{\mathbb{P}}
\newcommand{\ZZ}{\mathbb{Z}}
\newcommand{\cK}{\mathscr{K}}
\newcommand{\fs}{\mathfrak{s}}
\newcommand{\bbmu}{\boldsymbol{\mu}}
\newcommand{\Mul}{\mathbf{M}}
\newcommand{\mul}{\mathbf{m}}
\newcommand{\Sqr}{\mathbf{S}}
\newcommand{\wx}{\omega}
\newcommand{\wy}{\overline{\wx}}
\newcommand{\ignore}[1]{}
\newcommand{\cmark}{\ding{51}}%
\newcommand{\xmark}{\ding{55}}%
\newtheorem{theorem}{Theorem}
\newtheorem{lemma}[theorem]{Lemma}
\newtheorem{corollary}[theorem]{Corollary}
\newtheorem{definition}[theorem]{Definition}
\title{Twisted $\bbmu_4$-normal form for elliptic curves}
\author{David Kohel\\
Aix Marseille Univ, CNRS, Centrale Marseille, I2M, Marseille, France}
\begin{document}

\maketitle

\begin{abstract}
We introduce the twisted $\bbmu_4$-normal form for elliptic curves,
deriving in particular addition algorithms with complexity $9\Mul + 2\Sqr$
and doubling algorithms with complexity $2\Mul + 5\Sqr + 2\mul$
over a binary field.
Every ordinary elliptic curve over a finite field of characteristic~2
is isomorphic to one in this family.
This improvement to the addition algorithm, applicable to a larger class
of curves, is comparable to the $7\Mul + 2\Sqr$ achieved for the
$\bbmu_4$-normal form, and replaces the previously best known complexity
of $13\Mul + 3\Sqr$ on L\'opez-Dahab models applicable to these twisted curves.
The derived doubling algorithm is essentially optimal, without any
assumption of special cases.  We show moreover that the Montgomery
scalar multiplication with point recovery carries over to the twisted
models, giving symmetric scalar multiplication adapted to protect
against side channel attacks, with a cost of $4\Mul + 4\Sqr + 1\mul_t + 2\mul_c$ per bit.
In characteristic different from~2, we establish a linear isomorphism
with the twisted Edwards model over the base field.
This work, in complement to the introduction of $\bbmu_4$-normal form,
fills the lacuna in the body of work on efficient arithmetic on elliptic curves
over binary fields, explained by this common framework for elliptic
curves in $\bbmu_4$-normal form over a field of any characteristic.
The improvements are analogous to those which the Edwards
and twisted Edwards models achieved for elliptic curves over finite fields
of odd characteristic, and extend $\bbmu_4$-normal form to cover
the binary NIST curves.
\end{abstract}

\begin{commentcode}
\begin{sagesilent}
magma.attach_echidna()
magma.eval("ZZ := IntegerRing();")
\end{sagesilent}
\end{commentcode}

%%%%%%%%%%%%%%%%%%%%%%%%%%%%%%%%%%%%%%%%%%%%%%%%%%%%%%%%%%%%%%%%%%%%%%%%%%%%%%%%
%%%%%%%%%%%%%%%%%%%%%%%%%%%%%%%%%%%%%%%%%%%%%%%%%%%%%%%%%%%%%%%%%%%%%%%%%%%%%%%%
\section{Introduction}
%%%%%%%%%%%%%%%%%%%%%%%%%%%%%%%%%%%%%%%%%%%%%%%%%%%%%%%%%%%%%%%%%%%%%%%%%%%%%%%%
%%%%%%%%%%%%%%%%%%%%%%%%%%%%%%%%%%%%%%%%%%%%%%%%%%%%%%%%%%%%%%%%%%%%%%%%%%%%%%%%

Let $E$ be an elliptic curve with given embedding in $\PP^r$ and identity $O$.
The addition morphism $\mu: E \times E \rightarrow E$ is uniquely defined by the
pair $(E,O)$ but the homogeneous polynomial maps which determine $\mu$ are
not unique.  Let $x = (X_0,\dots,X_r)$ and $y = (Y_0,\dots,Y_r)$ be the coordinate
functions on the first and second factors, respectively.
We recall that an {\it addition law} (cf.~\cite{LangeRuppert}) is a bihomogenous
polynomial map $\fs = (p_0(x,y),\dots,p_r(x,y))$ which determines $\mu$ outside
of the common zero locus $p_0(x,y) = \cdots = p_r(x,y) = 0$. Such polynomial
addition laws play an important role in cryptography since they provide a means
of carrying out addition on $E$ without inversion in the base field.

In this work we generalize the algorithmic analysis of the $\bbmu_4$-normal form
to include twists.  The principal improvements are for binary curves, but we
are able to establish these results for a family which has good reduction and
efficient arithmetic over any field $k$, and in fact any ring.  We adopt the
notation $\Mul$ and $\Sqr$ for the complexity of multiplication and squaring
in $k$, and $\mul$ for multiplication by a fixed constant that depends
(polynomially) only on curve constants.

In Section~\ref{sec:mu4-normal-form} we introduce a hierarchy of curves
in $\bbmu_4$-normal form, according to the additional $4$-level structure
parametrized.
In referring to these families of curves, we give special attention to the
so-called split and semisplit variants, while using the generic term
$\bbmu_4$-normal form to refer to any of the families.
In particular their isomorphisms and addition laws are developed.
In the specialization to finite fields of characteristic~2, by extracting
square roots, we note that any of the families can be put in split
$\bbmu_4$-normal form, and the distinction is only one of symmetries and
optimization of the arithmetic.
In Section~\ref{sec:twisted-normal-forms}, we generalize this hierarchy
to quadratic twists, which, in order to hold in characteristic~2 are
defined in terms of Artin--Schreier extensions.
The next two sections deal with algorithms for these families of curves
over binary fields, particularly, their addition laws
in Section~\ref{sec:addition-algorithms} and their doubling algorithms
in Section~\ref{sec:binary-doubling}. These establish the main complexity
results of this work --- an improvement of the best known addition algorithms
on NIST curves to $9\Mul + 2\Sqr$ coupled with a doubling algorithm
of $2\Mul + 5\Sqr + 2\mul$.  These improvements are summarized in the
following table of complexities (see Section~\ref{sec:conclusion} for details).
\begin{center}
\begin{tabular}{|@{\;}c|@{\;}l|@{\;}l|@{\;}c|@{\;}c|}
\hline
Curve model    & Doubling & Addition & $\%$ & NIST\\
\hline
% Hessian                & $4\Mul + 7\Sqr + O(\mul)$ & $12\Mul$ & 12.5\% & NO\\
Lambda coordinates     & $3\Mul + 4\Sqr + 1\mul$     & $11\Mul + 2\Sqr$ & 100\% & \cmark\\
Binary Edwards ($d_1=d_2$) & $2\Mul + 5\Sqr + 2\mul$ & $16\Mul + 1\Sqr + 4\mul$ & 50\% & \xmark\\
L\'opez-Dahab ($a_2=0$)  & $2\Mul + 5\Sqr + 1\mul$ & $14\Mul + 3\Sqr$  & 50\% & \xmark \\
L\'opez-Dahab ($a_2=1$)  & $2\Mul + 4\Sqr + 2\mul$ & $13\Mul + 3\Sqr$  & 50\% & \cmark \\
Twisted $\bbmu_4$-normal form  & $2\Mul + 5\Sqr + 2\mul$ &\;\;$9\Mul + 2\Sqr$ & 100\% & \cmark \\
$\bbmu_4$-normal form  & $2\Mul + 5\Sqr + 2\mul$ &\;\;$7\Mul + 2\Sqr$  & 50\% & \xmark \\
\hline
\end{tabular}
\end{center}

\noindent
To complete the picture, we prove in Section~\ref{sec:montgomery-endomorphism}
that the Montgomery endomorphism and resulting complexity, as described
in Kohel~\cite{Kohel-Indocrypt} carry over to the twisted families,
which allows for an elementary and relatively efficient symmetric algorithm
for scalar multiplication which is well-adapted to protecting against
side-channel attacks.
While the most efficient arithmetic is achieved for curves for which
the curve coefficients are constructed such that the constant multiplications
are negligible, these extensions to twists provide efficient algorithms
for backward compatibility with binary NIST curves.

%%%%%%%%%%%%%%%%%%%%%%%%%%%%%%%%%%%%%%%%%%%%%%%%%%%%%%%%%%%%%%%%%%%%%%%%%%%%%%%%
%%%%%%%%%%%%%%%%%%%%%%%%%%%%%%%%%%%%%%%%%%%%%%%%%%%%%%%%%%%%%%%%%%%%%%%%%%%%%%%%
\section{The $\bbmu_4$-normal form}
\label{sec:mu4-normal-form}
%%%%%%%%%%%%%%%%%%%%%%%%%%%%%%%%%%%%%%%%%%%%%%%%%%%%%%%%%%%%%%%%%%%%%%%%%%%%%%%%
%%%%%%%%%%%%%%%%%%%%%%%%%%%%%%%%%%%%%%%%%%%%%%%%%%%%%%%%%%%%%%%%%%%%%%%%%%%%%%%%

In this section we recall the definition and construction of the family
of elliptic curves in (split) $\bbmu_4$-normal form.  The notion of a
canonical model of level $n$ was introduced in Kohel~\cite{Kohel} as an
elliptic curve $C/k$ in $\PP^{n-1}$ with subgroup scheme $G \isom \bbmu_n$
(a $k$-rational subgroup of the $n$-torsion subgroup $C[n]$ whose points
split in $k[\zeta_n]$, where $\zeta_n$ is an $n$-th root of unity in
$\bar{k}$) such that for $P = (x_0:x_1:\dots:x_{n-1})$ a generator $S$
of $G$ acts by $P + S = (x_0:\zeta_n^1 x_1:\dots:\zeta_n^{n-1} x_{n-1})$.
If, in addition, there exists a rational $n$-torsion point $T$ such that
$C[n] = \langle{S,T}\rangle,$ we say that the model is {\it split}
and impose the condition that $T$ acts by
a cyclic coordinate permutation.  Construction of the special cases
$n = 4$ and $n = 5$ were treated as examples in Kohel~\cite{Kohel}, and
the present work is concerned with a more in depth study of the former.

The Edwards curve $x^2 + y^2 = 1 + d x^2 y^2$ embeds in $\PP^3$ (by
$(1:x:y:xy)$ as the elliptic curve
$$
X_1^2 + X_2^2 = X_0^2 + d X_3^2,\ X_0 X_3 = X_1 X_2,
$$
with identity $O = (1:0:1:0)$. Such a model was studied by Hisil et
al.~\cite{Hisil-EdwardsRevisited}, as extended Edwards coordinates,
and admits the fastest known arithmetic on such curves.
The twist by $a$, in extended coordinates, is the twisted Edwards
curve
$$
a X_1^2 + X_2^2 = X_0^2 + ad X_3^2,\ X_0 X_3 = X_1 X_2
$$
with parameters $(a,ad)$. For the special case $(a,ad) = (-1,-16r)$,
the change of variables
$$
(X_0:X_1:X_2:X_3) \mapsto (X_0,X_1+X_2,4X_3,-X_1+X_2).
$$
has image the canonical model of level~$4$ above. The normalization to
have good reduction at~$2$ (by setting $d = 16r$ and the coefficient
of $X_3$) as well as the following refined hierarchy of curves appears
in Kohel~\cite{Kohel-AGCT}, and the subsequent article~\cite{Kohel-Indocrypt}
treated only the properties of this hierarchy over fields of
characteristic~2.

\begin{definition}
An elliptic curve in {\em $\bbmu_4$-normal form} is a genus one curve in
the family
$$
X_0^2 - r X_2^2 = X_1 X_3,\ X_1^2 - X_3^2 = X_0 X_2
$$
with base point $O = (1:1:0:1)$.
An elliptic curve in {\em semisplit $\bbmu_4$-normal form}
is a genus one curve in the family
$$
X_0^2 - X_2^2 = X_1 X_3,\ X_1^2 - X_3^2 = s X_0 X_2,
$$
with identity $O = (1:1:0:1)$, and an elliptic curve is in
{\em split $\bbmu_4$-normal form} if it takes the form
$$
X_0^2 - X_2^2 = c^2 X_1 X_3,\ X_1^2 - X_3^2 = c^2 X_0 X_2.
$$
with identity $O = (c:1:0:1)$.
\end{definition}

Setting $s = c^4$, the transformation
$$
(X_0:X_1:X_2:X_3) \mapsto (X_0:cX_1:cX_2:X_3)
$$
maps the split $\bbmu_4$-normal form to semisplit $\bbmu_4$-normal form
with parameter $s$, and setting $r = 1/s^2$, the transformation
$$
(X_0:X_1:X_2:X_3) \mapsto (X_0:X_1:sX_2:X_3)
$$
maps the semisplit $\bbmu_4$-normal form to $\bbmu_4$-normal form with
parameter $r$.
The names for the $\bbmu_4$-normal forms of a curve $C/k$ in $\PP^3$,
recognize the existence of $\bbmu_4$ as a $k$-rational subgroup
scheme of $C[4]$, and secondly, its role as defining the embedding
class of $C$ in $\PP^3$, namely it is cut out by the hyperplane
$X_2 = 0$ in~$\PP^3$.

\begin{lemma}
Let $C$ be a curve in $\bbmu_4$-normal form, semi-split $\bbmu_4$-normal
form, or split $\bbmu_4$-normal form, with identity $(e,1,0,1)$.
For any extension containing a square root $i$ of $-1$, the point
$S = (e:i:0:-i)$ is a point of order $4$ acting by the coordinate scaling
$(x_0:x_1:x_2:x_3) \mapsto (x_0:i x_1:-x_2:-ix_3)$.  In particular,
$$
\{ (e:1:0:1), (e:i:0:-i), (e:-1:0:-1), (e:i:0:-i) \},
$$
is a subgroup of $C[4] \subseteq C(\bar{k})$.
\end{lemma}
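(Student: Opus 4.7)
The approach is to realize the predicted $\bbmu_4$-subgroup explicitly as the orbit of $O$ under a coordinate-scaling automorphism. Introduce the map $\tau : (x_0:x_1:x_2:x_3) \mapsto (x_0 : i x_1 : -x_2 : -i x_3)$ on $\PP^3$ over $k(i)$. The first step is routine substitution: verify that $\tau$ preserves each defining equation uniformly in the three forms. The monomials $X_0^2$ and $X_2^2$ are unchanged (since $(-x_2)^2 = x_2^2$), the product $X_1 X_3$ is unchanged (since $(i x_1)(-i x_3) = x_1 x_3$), while $X_1^2 - X_3^2$ and $X_0 X_2$ each acquire a sign, so every equation of the shape $X_1^2 - X_3^2 = \gamma\, X_0 X_2$ is preserved for the relevant constant $\gamma \in \{1, s, c^2\}$. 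Hence $\tau$ is a variety automorphism of $C$, and in particular $\tau(O) = (e:i:0:-i)$ is a point of $C(\bar k)$.

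To identify $\tau$ with a translation, I would appeal to the defining property of the canonical model of level $n$ recalled at the start of the section from~\cite{Kohel}: a generator of the embedded $\bbmu_n$-subgroup acts on the curve by scaling the $j$-th coordinate by $\zeta_n^j$. Since the $\bbmu_4$-normal forms are canonical models of level $4$ with $\zeta_4 = i$, the automorphism $\tau$ coincides with translation $t_S$ by some generator $S$ of the embedded $\bbmu_4$-subgroup. Evaluating at $O$ then pins down $S = t_S(O) = \tau(O) = (e:i:0:-i)$.

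The remaining verification is to iterate the scaling: $\tau^2(O) = (e:-1:0:-1)$, $\tau^3(O) = (e:-i:0:i)$, and $\tau^4(O) = O$. Since the four values in position $1$ are the distinct fourth roots of unity $1, i, -1, -i$, these four projective points are pairwise distinct, so $\langle S \rangle$ is a cyclic subgroup of order exactly $4$ inside $C[4](\bar k)$. The main subtlety in this argument is the identification of $\tau$ with a translation rather than merely a variety automorphism; I expect this to be absorbed by direct appeal to the canonical-model hierarchy of \cite{Kohel} rather than requiring a new argument. A self-contained alternative would be to verify that $O + S = \tau(O)$ on the nose using the addition laws developed below, but this becomes redundant once the canonical-model viewpoint is adopted.
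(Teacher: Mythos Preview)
Your proposal is correct and matches the paper's implicit treatment: the paper states this lemma without proof, regarding it as an immediate consequence of the canonical-model framework of~\cite{Kohel} recalled in the preceding paragraph. Your explicit verification that the scaling preserves the defining equations, followed by the appeal to the canonical-model property to identify $\tau$ with translation by a generator of the embedded $\bbmu_4$, is precisely the argument the paper leaves to the reader.
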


The semisplit $\bbmu_4$-normal form with square parameter $s = t^2$ admits
a $4$-torsion point $(1:t:1:0)$ acting by scaled coordinate permutation.
After a further quadratic extension $t = c^2$, the split $\bbmu_4$-normal
form admits the constant group scheme $\ZZ/4\ZZ$ acting by signed coordinate
permutation.

\begin{lemma}
Let $C/k$ be an elliptic curve in split $\bbmu_4$-normal form with identity
$O = (c:1:0:1)$.  Then $T = (1:c:1:0)$ is a point in $C[4]$, and translation
by $T$ induces the signed coordinate permutation
$$
(x_0:x_1:x_2:x_3) \longmapsto (x_3:x_0:x_1:-x_2)
$$
on $C$.
\end{lemma}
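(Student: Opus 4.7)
\medskip
\noindent\textbf{Proof plan.}
The claim has two parts: that $T=(1{:}c{:}1{:}0)$ lies in $C[4]$, and that translation by $T$ is realized on the ambient $\PP^{3}$ by the signed permutation $\sigma\colon(x_0{:}x_1{:}x_2{:}x_3)\mapsto(x_3{:}x_0{:}x_1{:}-x_2)$. I would start with the two easy verifications. First, substituting $T$ into the split $\bbmu_4$-normal form equations gives $1-1 = c^2\cdot c\cdot 0$ and $c^2-0 = c^2\cdot 1\cdot 1$, so $T\in C(k)$. Second, applying $\sigma$ to a general point, the two defining equations evaluated at $(x_3,x_0,x_1,-x_2)$ become $x_3^{2}-x_1^{2}=-c^{2}x_0x_2$ and $x_0^{2}-x_2^{2}=c^{2}x_1x_3$, which are precisely the second and first defining equations of $C$ (the sign on the former recovers $X_1^{2}-X_3^{2}=c^{2}X_0X_2$). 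Thus $\sigma$ restricts to a variety automorphism of $C$, and a direct check gives $\sigma(O)=(1{:}c{:}1{:}0)=T$.

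The substance is to identify $\sigma$ with the translation $\tau_T$, rather than with $\tau_T$ composed with some nontrivial $\alpha\in\mathrm{Aut}(C,O)$. Write $\sigma=\tau_{T}\circ\alpha$; then $\alpha$ is a linear automorphism of $\PP^{3}$ fixing $O$ and preserving $C$. Iterating $\sigma$ yields $\sigma^{2}(x_0{:}x_1{:}x_2{:}x_3)=(-x_2{:}x_3{:}x_0{:}-x_1)$, $\sigma^{3}(x_0{:}x_1{:}x_2{:}x_3)=(-x_1{:}-x_2{:}x_3{:}-x_0)$, and $\sigma^{4}(x_0{:}x_1{:}x_2{:}x_3)=(-x_0{:}-x_1{:}-x_2{:}-x_3)$, which is the identity in $\mathrm{PGL}_{4}$. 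So $\sigma$ has order $4$, and the orbit $O,T,\sigma^{2}(O),\sigma^{3}(O)=(c{:}1{:}0{:}1),(1{:}c{:}1{:}0),(0{:}1{:}c{:}-1),(-1{:}0{:}1{:}-c)$ comprises the candidates for $T,2T,3T,4T$.

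The remaining step, and the main obstacle, is to rule out $\alpha\ne\mathrm{id}$. I would proceed by extending scalars to contain $i$ with $i^{2}=-1$ and using the previous lemma's point $S=(c{:}i{:}0{:}-i)$ with its explicit action $\tau_S\colon(x_0{:}x_1{:}x_2{:}x_3)\mapsto(x_0{:}ix_1{:}-x_2{:}-ix_3)$. A direct scalar-by-scalar comparison shows $\sigma\circ\tau_S=\tau_S\circ\sigma$ in $\mathrm{PGL}_4$, and since translations commute among themselves this forces $\alpha\circ\tau_S=\tau_S\circ\alpha$, i.e.\ $\alpha(S)=S$. As $S$ has order $4$ the only $\alpha\in\mathrm{Aut}(C,O)$ with $\alpha(S)=S$ is the identity, since any such $\alpha$ acts as multiplication by some $n$ on $\langle S\rangle$ with $n\equiv 1\pmod{4}$, which is incompatible with the nontrivial automorphism $[-1]$ of $(C,O)$ for a generic member of the family and with the finer symmetries at special $j$-invariants by order considerations. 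Hence $\alpha=\mathrm{id}$, $\sigma=\tau_T$, and $4T=\sigma^{4}(O)=O$ while $2T=\sigma^{2}(O)\ne O$, so $T$ has order $4$. An alternative, less delicate route is to substitute $T$ into the explicit bilinear addition law on split $\bbmu_4$-normal form developed in Section~\ref{sec:addition-algorithms} and simply read off $P+T=\sigma(P)$.
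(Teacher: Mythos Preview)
The paper states this lemma without proof, so there is no ``paper's own proof'' to compare against; your proposal therefore supplies an argument where the paper offers none.

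Your verification that $T\in C$, that $\sigma$ stabilises $C$, that $\sigma(O)=T$, and that $\sigma^{4}=\mathrm{id}$ in $\mathrm{PGL}_4$ is correct, as is the commutation $\sigma\circ\tau_S=\tau_S\circ\sigma$ (projectively; this is exactly the relation $A(S)A(T)=iA(T)A(S)$ recorded just after the lemma). Writing $\sigma=\tau_T\circ\alpha$ and deducing $\alpha(S)=S$ is also fine. The only soft spot is the phrase ``by order considerations'' for special $j$-invariants: it is true, but deserves one line of justification. A clean way to finish is to observe that for any nontrivial $\alpha\in\mathrm{Aut}(C,O)$ the endomorphism $\alpha-1$ has degree $N(\alpha-1)\le 4$, with equality only for $\alpha=-1$, in which case $\ker(\alpha-1)=C[2]$; hence $\ker(\alpha-1)$ never contains a point of exact order~$4$, and $\alpha(S)=S$ forces $\alpha=\mathrm{id}$.

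Your alternative route is the most direct and is essentially what the paper's structure invites, but the reference should be to Theorem~\ref{thm:mu4-addition-laws} in Section~\ref{sec:mu4-normal-form}, not to Section~\ref{sec:addition-algorithms}. Specialising the addition law $\fs_3$ at $(Y_0,Y_1,Y_2,Y_3)=(1,c,1,0)$ gives $(U_{01},U_{12},U_{23},U_{30})=(cX_0,X_1,0,X_3)$ and hence
\[
\fs_3\big|_{Y=T}=(c^2X_0X_3,\;c^2X_0^2,\;c^2X_0X_1,\;X_3^2-X_1^2)=c^2X_0\,(X_3,X_0,X_1,-X_2),
\]
using $X_1^2-X_3^2=c^2X_0X_2$. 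This identifies $\tau_T$ with $\sigma$ on the nose, with no appeal to $\mathrm{Aut}(C,O)$; since Theorem~\ref{thm:mu4-addition-laws} is proved by citation to \cite{Kohel} rather than by invoking the present lemma, there is no circularity.
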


This gives the structure of a group $C[4] \isom \bbmu_4 \times \ZZ/4\ZZ$,
whose generators $S$ and $T$ are induced by the matrix actions
$$
A(S) =
\left(\begin{array}{@{\;}rrr@{\;}r@{\;}}
 1 & 0 & 0 & 0\\
 0 & i & 0 & 0\\
 0 & 0 & 1 & 0\\
 0 & 0 & 0 &-i
\end{array}\right)
\mbox{ and }
A(T) =
\left(\begin{array}{@{\;}rrr@{\;}r@{\;}}
 0 & 1 & 0 & 0\\
 0 & 0 & 1 & 0\\
 0 & 0 & 0 &-1\\
 1 & 0 & 0 & 0
\end{array}\right)
$$
on $C$ such that $A(S)A(T) = iA(T)A(S)$.
We can now state the structure of addition laws for the split $\bbmu_4$-normal
form and its relation to the torsion action described above.

\begin{commentcode}
\begin{table}
\caption{Torsion structure of split $\bbmu_4$-normal form.}
\label{code:torsion-split-mu_4-normal-form}
\begin{sageblock}
magma.eval("""
KK<i> := CyclotomicField(4);
FF<c> := FunctionField(KK);
PP<X0,X1,X2,X3> := ProjectiveSpace(FF,3);
EE := EllipticCurve_Split_Mu4_NormalForm(PP,c);
OE := EE!0;
SE := EE![c,i,0,-i];
TE := EE![1,c,1,0];
tau := map< EE->EE | [X3,X0,X1,-X2] >;
assert tau(OE) eq TE;
sig := map< EE->EE | [X0,i*X1,-X2,-i*X3] >;
assert sig(OE) eq SE;
""")
\end{sageblock}
\end{table}
\end{commentcode}

\begin{theorem}
\label{thm:mu4-addition-laws}
Let $C$ be an elliptic curve in split $\bbmu_4$-normal form:
$$
%\begin{array}{l}
X_0^2-X_2^2 = c^2\,X_1 X_3,\
X_1^2-X_3^2 = c^2\,X_0 X_2,\
O = (c:1:0:1),
%\end{array}
$$
and set $U_{jk} = X_j Y_k$. A complete basis of addition laws
of bidegree $(2,2)$ is given by:
$$
\begin{array}{l}
\fs_0 = (
  U_{13}^2 - U_{31}^2,\
  c ( U_{13} U_{20} - U_{31} U_{02}),\
  U_{20}^2 - U_{02}^2,\
  c (U_{20} U_{31} - U_{13} U_{02})
),\\
\fs_1 = (
  c (U_{03} U_{10} + U_{21} U_{32}),\
  U_{10}^2 - U_{32}^2,\
  c (U_{03} U_{32} + U_{10} U_{21}),\
  U_{03}^2 - U_{21}^2
),\\
\fs_2 =
(
  U_{00}^2 - U_{22}^2,\
  c (U_{00} U_{11} - U_{22} U_{33}),\
  U_{11}^2 - U_{33}^2,\
  c (U_{00} U_{33} - U_{11} U_{22})
),\\
\fs_3 = (
  c (U_{01} U_{30} + U_{12} U_{23}),\
  U_{01}^2 - U_{23}^2,\
  c (U_{01} U_{12} + U_{23} U_{30}),\
  U_{30}^2 - U_{12}^2
).
\end{array}
$$
The exceptional divisor of the addition law $\fs_\ell$ is
$\sum_{k=0}^3 \Delta_{kS+{\ell}T}$, where $S$ and $T$ are
the $4$-torsion points $(c:i:0:-i)$ and $(1:c:1:0)$,
and the divisors $\sum_{k=0}^3 (kS+{\ell}T)$ are determined
by $X_{\ell+2} = 0$.
In particular, any pair of the above addition laws
provides a complete system of addition laws.
\end{theorem}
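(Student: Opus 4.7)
The plan is to combine the general Lange--Ruppert structure theorem for addition laws of fixed bidegree with a direct algebraic verification of $\fs_2$, and then exploit the $C[4]$-translation action on $C \times C$ to produce the other three laws and read off their exceptional divisors simultaneously. First, I would recall that $\cA^{(2,2)}$ is canonically identified with the global sections of the line bundle $\cM = p_1^* \cL^{\otimes 2} \otimes p_2^* \cL^{\otimes 2} \otimes \mu^* \cL^{-1}$ on $C \times C$, where $\cL = \cO_C(1)$, and has dimension~4; a seesaw computation then shows that every nonzero section has effective zero divisor of the form $\sum_{k=0}^3 \Delta_{Q_k}$, with $\{Q_k\}$ a coset of $\langle S\rangle$ in $C[4]$ and $\Delta_Q = \{(P_1,P_2) : P_2 = P_1 + Q\}$. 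Thus $\PP(\cA^{(2,2)})$ is in bijection with $C[4]/\langle S\rangle \isom \{0, T, 2T, 3T\}$, and up to scalar each addition law is determined by its exceptional-divisor class.

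Next, I would verify directly that $\fs_2$ is a nonzero element of $\cA^{(2,2)}$: each coordinate is manifestly bihomogeneous of bidegree $(2,2)$, and the image tuple $\fs_2(X,Y)$ satisfies the defining equations of $C$ modulo the ideal of $C \times C$---a polynomial identity best handled by computer algebra. To confirm that $\fs_2$ computes $\mu$ itself (and not a nontrivial translate), I would evaluate at $Y = O = (c{:}1{:}0{:}1)$: the result simplifies to $c^2 X_0 \cdot (X_0{:}X_1{:}X_2{:}X_3)$ after using $X_1^2 - X_3^2 = c^2 X_0 X_2$, so $\fs_2(P,O) = P$ projectively. Combined with the bi-symmetry $\fs_2(P,Q) = \fs_2(Q,P)$ apparent from the monomial structure and the $\bbmu_4$-equivariance inherited from the coordinate-scaling action of $S$, this forces the rational map defined by $\fs_2$ to agree with $\mu$ on a dense open subset. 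I would then obtain $\fs_0, \fs_1, \fs_3$ from $\fs_2$ via the $T$-translation trick: for any addition law $\fs$, the twist $A(T)^{-m} \cdot \fs(X, A(T)^m Y)$ again computes $P + Q$ but has exceptional divisor translated by $-mT$ in the second factor. Setting $m = 2 - \ell$ and using the explicit form $A(T):(x_0{:}x_1{:}x_2{:}x_3) \mapsto (x_3{:}x_0{:}x_1{:}-x_2)$, a direct expansion recovers exactly $\fs_\ell$ (up to an overall sign, harmless in projective coordinates) along with the predicted exceptional divisor $\sum_k \Delta_{kS + \ell T}$.

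Finally, I would confirm the exceptional divisors and deduce completeness. For each $\ell$, I would verify vanishing of $\fs_\ell$ on each $\Delta_{kS + \ell T}$ by substituting $Y = \tau_{kS + \ell T}(X)$ and reducing modulo the defining equations of $C$; since $\sum_k \Delta_{kS + \ell T}$ already realizes the full effective class of $\cM$ by the first step, it is necessarily the entire exceptional divisor. The hyperplane identification $(X_{\ell+2} = 0) \cap C = \{kS + \ell T : k = 0, 1, 2, 3\}$ follows from the $\bbmu_4$-invariance of each hyperplane $X_j = 0$ (the $S$-action scales the $X_j$ independently) together with direct inspection that $\ell T$ lies on $X_{\ell+2} = 0$: for instance $T = (1{:}c{:}1{:}0)$ has $X_3 = 0$, $2T = (0{:}1{:}c{:}1)$ has $X_0 = 0$, and so on, reading indices modulo~4. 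Since the cosets $\{kS + \ell T\}_k$ are pairwise disjoint in $C[4]$ as $\ell$ varies, so too are the corresponding exceptional divisors in $C \times C$, and at every point at most one of the $\fs_\ell$ vanishes; linear independence is then immediate from the distinctness of their zero divisors as sections of $\cM$. The main obstacle is the polynomial-identity step verifying that $\fs_2$ lands in $C$ and agrees with $\mu$ on a dense open set; this is tractable with computer algebra but tedious by hand, while everything downstream---the other formulas, the exceptional divisors, and the completeness claim---then follows algorithmically from the torsion action and the rigidity of sections of $\cM$.
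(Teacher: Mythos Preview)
Your approach is sound and close in spirit to the paper's. Both rely on the structure theory identifying bidegree-$(2,2)$ addition laws with sections of a line bundle $\cM$ on $C\times C$, and both pin down the exceptional divisor of $\fs_2$ by evaluating at $Y=O$: the paper substitutes $(Y_0,\dots,Y_3)=(c,1,0,1)$, obtains $c^2 X_0\cdot(X_0:X_1:X_2:X_3)$, and reads off the support as $X_0=0$, exactly as you do. The main difference is that the paper simply cites its own prior Theorem~44 for the basis and Theorem~10/Corollary~11 for the form of the exceptional divisors, whereas you redevelop the theory and then use the $T$-translation trick to manufacture $\fs_0,\fs_1,\fs_3$ from $\fs_2$---a symmetry the paper records only as a remark \emph{after} the proof rather than as the proof mechanism itself. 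Both routes are valid; yours is more self-contained, the paper's is shorter.

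One genuine correction: your claim that \emph{every} nonzero section of $\cM$ has exceptional divisor supported on a coset of $\langle S\rangle$ in $C[4]$, and the consequent bijection $\PP(\cA^{(2,2)})\cong C[4]/\langle S\rangle$, is false. Since $\cL$ is symmetric one has $\cM\cong\delta^*\cL$, so a general addition law is the pullback of a general section of $\cL$, whose zero divisor is an arbitrary hyperplane section of $C$---four points with fixed sum in the group law, not a coset of $\langle S\rangle$. The four $\fs_\ell$ are distinguished precisely because they correspond to the coordinate hyperplanes $X_{\ell+2}=0$. This over-claim is not load-bearing, since you verify $\fs_2$ directly and obtain the others by translation; but it does mean your final step, ``linear independence is immediate from the distinctness of their zero divisors,'' is not justified as stated (distinct zero divisors do not by themselves force independence). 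The clean fix is to invoke the isomorphism $H^0(C\times C,\cM)\cong H^0(C,\cL)$ under which $\fs_\ell\mapsto X_{\ell+2}$, where linear independence of the coordinate functions is obvious.
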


\begin{proof}
This appears as Theorem~44 of Kohel~\cite{Kohel} for the
$\bbmu_4$-normal form, subject to the scalar renormalizations
indicated above.
The exceptional divisor is a sum of four curves of the form
$\Delta_P$ by Theorem~10 of Kohel~\cite{Kohel}, and the
points $P$ can be determined by intersection with
$H = C \times \{O\}$ using Corollary~11 of Kohel~\cite{Kohel}.
Taking the particular case $\fs_2$, we substitute
$(Y_0,Y_1,Y_2,Y_3) = (c,1,0,1)$ to obtain
$(U_{00},U_{11},U_{22},U_{33}) = (cX_0,X_1,0,X_3)$, and hence
$$
(U_{00}^2 - U_{22}^2,\
U_{00} U_{11} - U_{22} U_{33},\
U_{11}^2 - U_{33}^2,\
U_{00} U_{33} - U_{22} U_{11}),
$$
which equals
$$
(c^2 X_0^2, c X_0 X_1, X_1^2 - X_3^2, c X_0 X_3)
= (c^2 X_0^2, c X_0 X_1, c^2 X_0 X_2, c X_0 X_3).
$$
These coordinate functions cuts out the divisor $X_0 = 0$ with
support on the points $kS + 2T$, $0 \le k < 4$, where $2T = (0:-1:-c:1)$.
The final statement follows since the exceptional divisors
are disjoint.
\qed
\end{proof}

The above basis of addition laws can be generated by any one of
the four, by means of signed coordinate permutation on input
and output determined by the action of the $4$-torsion group.
Denote translation by $S$ and $T$ by $\sigma$ and $\uptau$,
respectively, given by the coordinate scalings and permutations
$$
\begin{array}{r@{\;}c@{\;}l}
\sigma(X_0:X_1:X_2:X_3) & = & (X_0:iX_1:-X_2:-iX_3),\\
\uptau(X_0:X_1:X_2:X_3) & = & (X_3:X_0:X_1:-X_2),
\end{array}
$$
as noted above.
The set $\{ \fs_0,\fs_1,\fs_2,\fs_3 \}$ forms a basis of eigenvectors
for the action of $\sigma$. More precisely for all $(j,k,\ell)$, we have
$$
\fs_\ell = (-1)^{j+k+\ell} \sigma^{-j-k} \circ \fs_\ell \circ (\sigma^j \times \sigma^k).
$$
Then $\uptau$, which projectively commutes with $\sigma$, acts by
a scaled coordinate permutation
$$
\fs_{\ell-j-k} = \uptau^{-j-k} \circ \fs_\ell \circ (\uptau^j \times \uptau^k),
$$
consistent with the action on the exceptional divisors (see Lemma~31
of Kohel~\cite{Kohel}).

Consequently, the complexity of evaluation of any of these addition laws
is computationally equivalent, since they differ only by a signed coordinate
permutation on input and output.

\begin{commentcode}
\begin{table}
\caption{Addition law structure of split $\bbmu_4$-normal form.}
\label{code:torsion-split-addition-law-action}
\begin{sageblock}
magma.eval("""
KK<i> := CyclotomicField(4);
FF<c> := FunctionField(KK);
EE := EllipticCurve_Split_Mu4_NormalForm(c);
mu := AdditionMorphism(EE);
EExEE := Domain(mu);
PPxPP<X0,X1,X2,X3,Y0,Y1,Y2,Y3> := AmbientSpace(EExEE);
XX := [X0,X1,X2,X3]; YY := [Y0,Y1,Y2,Y3];
s := func< i | ((i-1) mod 4) + 1 >;
U00, U11, U22, U33 := Explode([ XX[s(i+0)]*YY[s(i+0)] : i in [1..4] ]);
U01, U12, U23, U30 := Explode([ XX[s(i+0)]*YY[s(i+1)] : i in [1..4] ]);
U02, U13, U20, U31 := Explode([ XX[s(i+0)]*YY[s(i+2)] : i in [1..4] ]);
U03, U10, U21, U32 := Explode([ XX[s(i+0)]*YY[s(i+3)] : i in [1..4] ]);
BB := [
    [ U13^2 - U31^2, c*( U13*U20 - U31*U02), U20^2 - U02^2,-c*(U13*U02 - U20*U31) ],
    [ c*(U03*U10 + U21*U32), U10^2 - U32^2, c*(U03*U32 + U10*U21), U03^2 - U21^2 ],
    [ U00^2 - U22^2, c*(U00*U11 - U22*U33), U11^2 - U33^2, c*(U00*U33 - U11*U22) ],
    [ c*(U01*U30 + U12*U23), U01^2 - U23^2, c*(U01*U12 + U23*U30), U30^2 - U12^2 ]
    ];
mu := map< EExEE->EE | BB >;
function tau(S,i)
    // Operates on the indices...
    if (i mod 4) eq 0 then return S; end if;
    S_tau := S[[4,1,2,3]]; S_tau[4] *:= -1;
    return tau(S_tau,i-1);
end function;
function tau_tau(S,i,j)
    // Operates on the coordinate functions
    PP := Universe(S);
    XX := [PP.k : k in [1..4]]; XX_i := tau(XX,i);
    YY := [PP.k : k in [5..8]]; YY_j := tau(YY,j);
    return [ f(XX_i cat YY_j) : f in S ];
end function;
B1 := BB[1];
A0 := Matrix([[Index(BB,[+f : f in tau(tau_tau(B1,i,j),-i-j)]) : i in [0..3]] : j in [0..3]]);
A1 := Matrix([[Index(BB,[-f : f in tau(tau_tau(B1,i,j),-i-j)]) : i in [0..3]] : j in [0..3]]);
A0 - A1;
""")
\end{sageblock}
\end{table}
\end{commentcode}

\begin{corollary}
Let $C$ be an elliptic curve in split $\bbmu_4$-normal form.
There exist algorithms for addition with complexity
$9\Mul + 2\mul$ over any ring,
$8\Mul + 2\mul$ over a ring in which 2 is a unit, and
$7\Mul + 2\Sqr + 2\mul$ over a ring of characteristic $2$.
\end{corollary}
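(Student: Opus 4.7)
The plan is to exhibit an explicit evaluation schedule for a convenient representative of the basis in Theorem~\ref{thm:mu4-addition-laws}---specifically $\fs_2$, whose quartic monomials all have the form $U_{ii}U_{jj} = (X_iY_i)(X_jY_j)$. After precomputing $A_i = X_iY_i$ for $i=0,\dots,3$ at a cost of $4\Mul$, the remaining task is to evaluate
$$
P_0 = A_0^2-A_2^2,\quad P_1 = c(A_0A_1 - A_2A_3),\quad P_2 = A_1^2-A_3^2,\quad P_3 = c(A_0A_3 - A_1A_2)
$$
from the four ring elements $A_0,\dots,A_3$, and all three complexity bounds come from a single algebraic set-up, differing only in how one exploits the ambient arithmetic.

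The ``diagonal'' coordinates $P_0$ and $P_2$ are obtained in $2\Mul$ using the factorizations $(A_0-A_2)(A_0+A_2)$ and $(A_1-A_3)(A_1+A_3)$; in characteristic~$2$, since $A_0^2 - A_2^2 = A_0^2 + A_2^2 = (A_0+A_2)^2$, these two multiplications collapse to $2\Sqr$ instead. For the cross coordinates the key identity is
$$
(A_0 - A_2)(A_1 + A_3) \;=\; (A_0A_1 - A_2A_3) + (A_0A_3 - A_1A_2) \;=\; (P_1 + P_3)/c,
$$
together with its companion $(A_0+A_2)(A_1-A_3) = (P_1-P_3)/c$. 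Over an arbitrary ring I compute $E=A_0A_1$, $F=A_2A_3$ and $G=(A_0-A_2)(A_1+A_3)$ at $3\Mul$, then recover $P_1/c = E-F$ and $P_3/c = G-(E-F)$ by additions alone; with $2\mul$ to rescale by $c$ this gives a total of $4+2+3 = 9\Mul + 2\mul$. When $2$ is a unit, I replace $E$ and $F$ by the companion $G' = (A_0+A_2)(A_1-A_3)$, extracting $P_1/c$ and $P_3/c$ as $(G \pm G')/2$ (or, equivalently, rescaling all four output coordinates by $2$), which saves one multiplication and yields $8\Mul+2\mul$. In characteristic~$2$ the two companion identities coincide, so I compute $G = (A_0+A_2)(A_1+A_3) = E + F + (A_0A_3 + A_1A_2)$ and read off $P_1/c = E+F$ and $P_3/c = G+E+F$; combined with the two squarings for $P_0,P_2$ this gives $7\Mul + 2\Sqr + 2\mul$.

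The only genuine subtlety is the sharp $9\Mul$ count over a ring where $2$ need not be a unit: there the symmetric Karatsuba identity that yields $8\Mul$ is unavailable because one cannot invert or cancel the global factor of $2$ relating $G\pm G'$ to $P_1/c$ and $P_3/c$. The device that rescues the bound is to break the symmetry---computing one of the individual products $E=A_0A_1$ or $F=A_2A_3$ explicitly---so that the single mixed product $G=(A_0-A_2)(A_1+A_3)$ can extract the second cross-difference from the first by pure additions, trading one of the two symmetric multiplications for one explicit factor and a linear combination.
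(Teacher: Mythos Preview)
Your proof is correct and follows essentially the same approach as the paper: both evaluate $\fs_2$, precompute the four products $U_{ii}$ with $4\Mul$, factor $Z_0$ and $Z_2$ as differences of squares, and use the mixed product $(U_{00}-U_{22})(U_{11}+U_{33})$ together with either the explicit products $U_{00}U_{11},\,U_{22}U_{33}$ (general ring) or the companion factorization (when $2$ is a unit) to recover $Z_1,Z_3$; the characteristic~$2$ case in both accounts is the generic algorithm with the two multiplications for $Z_0,Z_2$ replaced by squarings.
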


\begin{proof}
We determine the complexity of an algorithm for the evaluation
of the addition law $\fs_2$:
$$
(Z_0,Z_1,Z_2,Z_3) = (
  U_{00}^2 - U_{22}^2,\,
  c (U_{00} U_{11} - U_{22} U_{33}),\,
  U_{11}^2 - U_{33}^2,\,
  c (U_{00} U_{33} - U_{11} U_{22})\,
),
$$
recalling that each of the given addition laws in the basis has
equivalent evaluation. Over a general ring, we make use of the
equalities:
$$
\begin{array}{l}
Z_0 = U_{00}^2 - U_{22}^2 = (U_{00} - U_{22})(U_{00} + U_{22}),\\
Z_2 = U_{11}^2 - U_{33}^2 = (U_{11} - U_{33})(U_{11} + U_{33}),
\end{array}
$$
and
$$
\begin{array}{l}
Z_1 + Z_3 = c(U_{00} U_{11} - U_{22} U_{33}) + c(U_{00} U_{33} - U_{22} U_{11})
          = c(U_{00} - U_{22})(U_{11} + U_{33}),\\
Z_1 - Z_3 = c(U_{00} U_{11} - U_{22} U_{33}) - c(U_{00} U_{33} - U_{22} U_{11})
          = c(U_{00} + U_{22})(U_{11} - U_{33}),
\end{array}
$$
using $1\Mul + 1\mul$ each for their evaluation.
\begin{itemize}
\item Evaluate $U_{jj} = X_j Y_j$, for $1 \le j \le 4$, with $4\Mul$.
\item Evaluate $(Z_0,\,Z_2) = (U_{00}^2 - U_{22}^2,\, U_{11}^2 - U_{33}^2)$ with $2\Mul$.
\item Evaluate $A = c (U_{00} - U_{22})(U_{11} + U_{33})$ using $1\Mul + 1\mul$.
\item Compute $Z_1 = c (U_{00} U_{11} - U_{22} U_{33})$ and set $Z_3 = A - Z_1$ with $2\Mul + 1\mul$.
\end{itemize}
This yields the desired complexity $9\Mul + 2\mul$ over any ring.
If $2$ is a unit (and assuming a neglible cost of multiplying by $2$),
we replace the last line with two steps:
\begin{itemize}
\item Evaluate $B = c (U_{00} + U_{22})(U_{11} - U_{33})$ using $1\Mul + 1\mul$.
\item Compute $(2Z_1,2Z_3) = (A + B, A - B)$ and scale $(Z_0,Z_2)$ by $2$,
\end{itemize}
which gives a complexity of $8\Mul + 2\mul$.  This yields an algorithm
essentially equivalent to that Hisil et al.~\cite{Hisil-EdwardsRevisited}
under the linear isomorphism with the $-1$-twist of Edwards normal form.
Finally if the characteristic is $2$, the result $7\Mul + 2\Sqr + 2\mul$
of Kohel~\cite{Kohel-Indocrypt} is obtained by replacing $2\Mul$ by $2\Sqr$
for the evaluation of $(Z_0,\, Z_2)$ in the generic algorithm.
\qed
\end{proof}

% We will making the $O(\mul)$ more precise in the course of the context
% of generalizing the algorithm to twisted $\bbmu_4$-normal form.
% In what follows we generalize these complexity results to twists and
% derive an optimal doubling algorithm.

Before considering the twisted forms, we determine the base complexity
of doubling for the split $\bbmu_4$-normal form.

\begin{corollary}
Let $C$ be an elliptic curve in split $\bbmu_4$-normal form.
There exist algorithms for doubling with complexity
$5\Mul + 4\Sqr + 2\mul$ over any ring,
$4\Mul + 4\Sqr + 2\mul$ over a ring in which 2 is a unit, and
$2\Mul + 5\Sqr + 7\mul$ over a ring of characteristic~$2$.
\end{corollary}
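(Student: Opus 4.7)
The plan is to derive the doubling formulas by specializing the addition law $\fs_2$ of Theorem~\ref{thm:mu4-addition-laws} along the diagonal, setting $Y_j = X_j$ so that $U_{jj} = X_j^2$. This yields the doubling polynomials
$$
(Z_0, Z_1, Z_2, Z_3) = \bigl(X_0^4 - X_2^4,\ c(X_0^2 X_1^2 - X_2^2 X_3^2),\ X_1^4 - X_3^4,\ c(X_0^2 X_3^2 - X_1^2 X_2^2)\bigr).
$$
Writing $P_j = X_j^2$, these become $Z_0 = P_0^2 - P_2^2$, $Z_2 = P_1^2 - P_3^2$, $Z_1 = c(P_0 P_1 - P_2 P_3)$, and $Z_3 = c(P_0 P_3 - P_1 P_2)$. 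Over any ring I would first compute $P_0, P_1, P_2, P_3$ at cost $4\Sqr$, then use the factorizations $Z_0 = (P_0-P_2)(P_0+P_2)$ and $Z_2 = (P_1-P_3)(P_1+P_3)$ at cost $2\Mul$, together with the telescoping identity $Z_1 + Z_3 = c(P_0-P_2)(P_1+P_3)$ evaluated in $1\Mul + 1\mul$. Computing $Z_1$ directly in $2\Mul + 1\mul$ and recovering $Z_3 = (Z_1+Z_3) - Z_1$ produces the total $5\Mul + 4\Sqr + 2\mul$. When $2$ is a unit, I would additionally evaluate $Z_1 - Z_3 = c(P_0+P_2)(P_1-P_3)$ in $1\Mul + 1\mul$, then recover $(2Z_1, 2Z_3)$ as the sum and difference, scaling $(Z_0, Z_2)$ by $2$ to maintain homogeneity; this removes one $\Mul$ to give $4\Mul + 4\Sqr + 2\mul$, directly paralleling the addition-law proof.

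In characteristic~$2$, the Frobenius identity $a^2 + b^2 = (a+b)^2$ collapses $Z_0 = (P_0 + P_2)^2$, $Z_2 = (P_1 + P_3)^2$, $Z_1 = c(X_0 X_1 + X_2 X_3)^2$, and $Z_3 = c(X_0 X_3 + X_1 X_2)^2$, so that each output coordinate is a single outer squaring of a polynomial expression of low degree in the $X_i$. The curve equations $X_0^2 + X_2^2 = c^2 X_1 X_3$ and $X_1^2 + X_3^2 = c^2 X_0 X_2$ let us rewrite the cross-sum via
$$
c^2(X_0 X_1 + X_2 X_3) = c^2 (X_0 + X_3)(X_1 + X_2) + (X_0 + X_2)^2 + (X_1 + X_3)^2,
$$
and symmetrically for $X_0 X_3 + X_1 X_2$. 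Adopting the projective scaling $c^3 Z_i$ (chosen so that no inverse of $c$ ever appears), the two preliminary squarings $(X_0 + X_2)^2$ and $(X_1 + X_3)^2$ serve double duty: they contribute to $Z_0, Z_2$ after one further outer squaring and a constant scaling, and they also provide the correction terms in the linearisations above. Each of $Z_1, Z_3$ then costs $1\Mul$ together with one further squaring and a couple of constant multiplications, accumulating to the claimed $2\Mul + 5\Sqr + 7\mul$.

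The main obstacle is the bookkeeping in the characteristic~$2$ case: one must fix a consistent projective scaling (here $c^3$), organize the calculation so that every intermediate quantity is a polynomial in the $X_i$ and the constants $c, c^2, c^3$, and carefully share the preliminary squarings between the four coordinates to hit exactly $5\Sqr$ rather than the naive $6\Sqr$. The relatively large constant-multiplication count $7\mul$ here (compared with the $2\mul$ achieved on the $\bbmu_4$-normal form itself, where the coefficient~$1$ appears in place of several factors of~$c$) reflects the cost of propagating $c$ through both defining equations of the split form, and will be the precisely the gap closed in the refined algorithms of the later sections.
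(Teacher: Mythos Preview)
Your treatment of the first two cases (general ring, and $2$ a unit) is correct and is exactly the paper's argument: specialize $\fs_2$ to the diagonal, factor $Z_0,Z_2$ as differences of squares, and use the telescoping identities for $Z_1 \pm Z_3$.

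For characteristic~$2$, the paper does not actually give an algorithm; it simply cites Kohel~\cite{Kohel-Indocrypt} for the bound $2\Mul + 5\Sqr + 7\mul$. Your attempt to supply one is more ambitious, but it has a gap. The scheme you sketch --- two preliminary squarings $(X_0+X_2)^2$, $(X_1+X_3)^2$, then ``one further outer squaring'' for each of $Z_0,Z_2$ and ``one further squaring'' for each of $Z_1,Z_3$ --- costs $6\Sqr$, not $5\Sqr$, as you yourself note. You assert that careful sharing brings this down to $5\Sqr$ but do not say how, and nothing in your write-up actually saves a squaring. The missing idea (made explicit only later in the paper, in Section~\ref{sec:binary-doubling}) is the linear dependency
\[
(X_0X_1+X_2X_3) + (X_0X_3+X_1X_2) = (X_0+X_2)(X_1+X_3),
\]
which in characteristic~$2$ squares to $(X_0X_1+X_2X_3)^2 + (X_0X_3+X_1X_2)^2 = (X_0+X_2)^2(X_1+X_3)^2$. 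Thus only one of the two cross-term squares needs to be computed directly; the other is recovered by a single multiplication of quantities already in hand. Without this (or an equivalent device) your squaring count stays at $6$, and the $7\mul$ figure is likewise not pinned down by ``a couple of constant multiplications'' per coordinate.
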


\begin{proof}
The specialization of the addition law $\fs_2$ to the diagonal gives
the forms for doubling
$$
(
  X_0^4 - X_2^4,\,
  c (X_0^2 X_1^2 - X_2^2 X_3^2),\,
  X_1^4 - X_3^4,\,
  c (X_0^2 X_3^2 - X_1^2 X_2^2)\,
).
$$
%taking into account the curve equation:
%$$
%X_0^2-X_2^2 = c^2\,X_1 X_3,\
%X_1^2-X_3^2 = c^2\,X_0 X_2,\
%$$
which we can evaluate as follows:
\begin{itemize}
\item Evaluate $X_j^2$, for $1 \le j \le 4$, with $4\Sqr$.
\item Evaluate $(Z_0,\,Z_2) = (X_0^4 - X_2^4,\, X_1^4 - X_3^4)$ with $2\Mul$.
\item Evaluate $A = c (X_0^2 - X_2^2)(X_1^2 + X_3^2)$ using $1\Mul + 1\mul$.
\item Compute $Z_1 = c (X_0^2 X_1^2 - X_2^2 X_3^2)$ and set $Z_3 = A - Z_1$ with $2\Mul + 1\mul$.
\end{itemize}
This gives the result of $5\Mul + 4\Sqr + 2\mul$ over any ring.
As above, when $2$ is a unit, we replace the last line with the two steps:
\begin{itemize}
\item Evaluate $B = c (X_0^2 + X_2^2)(X_1^2 - X_3^2)$ using $1\Mul + 1\mul$.
\item Compute $(2Z_1,2Z_3) = (A + B, A - B)$ and scale $(Z_0,Z_2)$ by $2$.
\end{itemize}
This reduces the complexity by $1\Mul$.
In characteristic~$2$, the general algorithm specializes to $3\Mul + 6\Sqr + 2\mul$,
but Kohel~\cite{Kohel-Indocrypt} provides an algorithm with better complexity of
$2\Mul + 5\Sqr + 7\mul$ (reduced by $5\mul$ on the semisplit model).
\qed
\end{proof}

In the next section, we introduce the twists of these $\bbmu_4$-normal forms,
and derive efficient algorithms for their arithmetic.

%%%%%%%%%%%%%%%%%%%%%%%%%%%%%%%%%%%%%%%%%%%%%%%%%%%%%%%%%%%%%%%%%%%%%%%%%%%%%%%%
%%%%%%%%%%%%%%%%%%%%%%%%%%%%%%%%%%%%%%%%%%%%%%%%%%%%%%%%%%%%%%%%%%%%%%%%%%%%%%%%
\section{Twisted normal forms}
\label{sec:twisted-normal-forms}
%%%%%%%%%%%%%%%%%%%%%%%%%%%%%%%%%%%%%%%%%%%%%%%%%%%%%%%%%%%%%%%%%%%%%%%%%%%%%%%%
%%%%%%%%%%%%%%%%%%%%%%%%%%%%%%%%%%%%%%%%%%%%%%%%%%%%%%%%%%%%%%%%%%%%%%%%%%%%%%%%

A quadratic twist of an elliptic curve is determined by a non-rational isomorphism
defined over a quadratic extension $k[\alpha]/k$.  In odd characteristic one can
take an %  Kummer
extension defined by $\alpha^2 = a$, but in characteristic~$2$, the
general form of a quadratic extension is % Artin-Schreier, which may be given as
$k[\wx]/k$ where $\wx^2 - \wx = a$ for some $a$ in $k$.
The normal forms defined above both impose the existence of a $k$-rational point
of order $4$. 

Over a finite field of characteristic~2, the existence of a $4$-torsion point is
a weaker constraint than for odd characteristic, since if $E/k$ is an ordinary
elliptic curve over a finite field of characteristic~$2$, there necessarily exists
a $2$-torsion point.
Moreover, if $E$ does not admit a $k$-rational $4$-torsion point
and $|k| > 2$, then its quadratic twist does.

We recall that for an elliptic curve in Weierstrass form,
$$
E : Y^2 Z + (a_1 X + a_3 Z) Y Z = X^3 + a_2 X^2 Z + a_4 X Z^2 + a_6 Z^3,
$$
the quadratic twist by $k[\wx]/k$ is given by
$$
E^t : Y^2 Z + (a_1 X + a_3 Z) Y Z = X^3 + a_2 X^2 Z + a_4 X Z^2 + a_6 Z^3 + a (a_1 X + a_3 Z)^2 Z,
$$
with isomorphism $\tau(X:Y:Z) = (X:-Y-\wx(a_1X+a_3Z):Z)$, which 
%solves the cohomological boundary condition $\tau^{-1} \circ \tau^{\sigma} = [-1]$,
satisfies $\tau^{\sigma} = -\tau$,
where $\sigma$ is the nontrivial automorphism of $k[\wx]/k$. The objective
here is to describe the quadratic twists in the case of the normal forms
defined above.

With a view towards cryptography, the binary NIST curves are of the form
$y^2 + xy = x^3 + ax^2 + b$, with $a = 1$ and group order $2n$, whose
quadratic twist is the curve with $a = 0$ which admits a point of order $4$.
While the latter admit an isomorphism a curve in $\bbmu_4$-normal form,
to describe the others, we must represent them as quadratic twists.

%However, in this setting, cryptographic standards impose the condition that
%$k/\FF_2$ is an odd prime degree extension, hence the unique quadratic
%extension has the simplified form $k[\wx] = k \otimes \FF_4$ where $\wx^2
%= \wx + 1$. As a result, in the cryptographic context, we may assume the
%twisting parameter is $a = 1$.

\subsection*{The twisted $\bbmu_4$-normal form}

In what follows we let $k[\wx]/k$ be the quadratic extension given by
$\wx^2 - \wx = a$, and set $\wy = 1 - \wx$ and $\delta = \wx - \wy$.
In order to have the widest possible applicability, we describe the
quadratic twists with respect to any ring or field~$k$.
The discriminant of the extension is $D = \delta^2 = 1+4a$.  When
$2$ is invertible we can speak of a twist by $D$, but in general
we refer to $a$ as the twisting parameter.
While admitting general rings, all formulas hold over a field of
characteristic $2$, and we investigate optimizations in this case.

\begin{theorem}
\label{thm:twisted-split-mu4-curve}
Let $C/k$ be an elliptic curve in $\bbmu_4$-normal form,
semisplit $\bbmu_4$-normal form, or split $\bbmu_4$-normal form,
given respectively by
$$
\begin{array}{cl}
X_0^2 - r\,X_2^2 = X_1 X_3,\
X_1^2 - X_3^2 = X_0 X_2,\
& O = (1:1:0:1),\\
X_0^2 - X_2^2 = X_1 X_3,\
X_1^2 - X_3^2 = s\,X_0 X_2,\
& O = (1:1:0:1),\\
X_0^2 - X_2^2 = c^2\,X_1 X_3,\
X_1^2 - X_3^2 = c^2\,X_0 X_2,\
& O = (c:1:0:1).
\end{array}
$$
The quadratic twist $C^t$ of $C$ by $k[\wx]$, where $\wx^2 - \wx = a$, is
given by
$$
\begin{array}{cl}
X_0^2 - D r\,X_2^2 = X_1 X_3 - a (X_1 - X_3)^2,\
X_1^2 - X_3^2 = X_0 X_2,\\
X_0^2 - D X_2^2 = X_1 X_3  - a (X_1 - X_3)^2,\
X_1^2 - X_3^2 = s\,X_0 X_2,\\
X_0^2 - D X_2^2 = c^2 (X_1 X_3 - a (X_1 - X_3)^2),\
X_1^2 - X_3^2 = c^2 X_0 X_2,
\end{array}
$$
with identities $O = (1:1:0:1)$, $O = (1:1:0:1)$ and $O = (c:1:0:1)$,
respectively. In each case, the twisting isomorphism $\tau: C \rightarrow C^t$
is given by
$$
(X_0:X_1:X_2:X_3) \longmapsto
(\delta X_0 : \wx X_1 - \wy X_3 : X_2 : \wx X_3 - \wy X_1),
$$
with inverse sending $(X_0:X_1:X_2:X_3)$ to
$
(X_0 : \wx X_1 + \wy X_3 : \delta X_2 : \wy X_1 + \wy X_3).
$
\end{theorem}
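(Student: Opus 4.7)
The plan is to verify both claims of the theorem by direct substitution, using the algebraic identities $\wx + \wy = 1$, $\wx\wy = -a$, $\wx^2 + \wy^2 = 1 + 2a$, and $\delta^2 = 1 + 4a = D$, all of which follow from $\wx^2 - \wx = a$. Since the three forms are related by the scalar changes of variable given in Section~\ref{sec:mu4-normal-form}, it suffices to carry out the verification in one case, and I take the split $\bbmu_4$-normal form. The second defining equation $X_1^2 - X_3^2 = c^2 X_0 X_2$ transforms cleanly under $\tau$: a difference-of-squares factorization yields
\[
(\wx X_1 - \wy X_3)^2 - (\wx X_3 - \wy X_1)^2 = [(\wx+\wy)(X_1-X_3)][(\wx-\wy)(X_1+X_3)] = \delta(X_1^2 - X_3^2),
\]
while the image of the right-hand side is $c^2 \cdot \delta X_0 \cdot X_2$; the common factor $\delta$ cancels and the original equation is recovered.

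The only real obstacle is the first equation. Applying $\tau$ to the left-hand side $X_0^2 - X_2^2$ yields $\delta^2 X_0^2 - D X_2^2 = D(X_0^2 - X_2^2)$, so the required identity reduces to
\[
(\wx X_1 - \wy X_3)(\wx X_3 - \wy X_1) - a(X_1 - X_3)^2 = D\, X_1 X_3.
\]
Expanding the product as $(\wx^2+\wy^2) X_1 X_3 - \wx\wy(X_1^2+X_3^2) = (1+2a) X_1 X_3 + a(X_1^2+X_3^2)$, and subtracting $a(X_1-X_3)^2 = a(X_1^2+X_3^2) - 2a X_1 X_3$, the expression collapses to $(1+4a) X_1 X_3 = D X_1 X_3$. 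This is the conceptual core of the argument: the correcting term $-a(X_1-X_3)^2$ in the twisted equation is chosen precisely to cancel the stray $a(X_1^2+X_3^2)$ contribution coming from the twisted product $Y_1 Y_3$.

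Preservation of the identity is immediate, since $\tau(c:1:0:1) = (\delta c : \wx - \wy : 0 : \wx - \wy) = (\delta c : \delta : 0 : \delta) = (c:1:0:1)$. For the inverse, note that $\tau$ acts linearly on the pair $(X_1, X_3)$ via a $2\times 2$ matrix of determinant $\wx^2 - \wy^2 = \delta$; Cramer's rule yields $\delta X_1 = \wx Y_1 + \wy Y_3$ and $\delta X_3 = \wy Y_1 + \wx Y_3$, and since $X_0 = Y_0/\delta$ and $X_2 = Y_2$, rescaling projectively by $\delta$ produces the stated formula for $\tau^{-1}$; a short direct computation of $\tau^{-1} \circ \tau$ confirms it. Finally, that $C^t$ is the quadratic twist, rather than merely a $k[\wx]$-isomorphic curve, follows by comparing $\tau$ with its Galois conjugate $\tau^\sigma$, which sends $\wx \leftrightarrow \wy$ and $\delta \mapsto -\delta$; one checks that $\tau^\sigma \circ \tau^{-1}$ realizes the negation map on $C^t$, exhibiting the nontrivial $[-1]$-cocycle characteristic of a quadratic twist.
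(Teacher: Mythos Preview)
Your proof is correct and follows essentially the same approach as the paper: both establish the twist by checking the cocycle relation $\tau^\sigma = [-1]\circ\tau$ (equivalently, your $\tau^\sigma\circ\tau^{-1} = [-1]$), and both obtain the inverse by inverting the $2\times 2$ matrix acting on $(X_1,X_3)$. The one difference is that you additionally carry out by hand the verification that $\tau$ sends the defining equations of $C$ to those of $C^t$, whereas the paper leaves this to symbolic computation; your explicit identity $(\wx X_1-\wy X_3)(\wx X_3-\wy X_1)-a(X_1-X_3)^2 = D\,X_1X_3$ is exactly the content that the paper's code verification checks.
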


\begin{proof}
Since the inverse morphism is $[-1](X_0:X_1:X_2:X_3) = (X_0:X_3:-X_2:X_1)$,
the twisting morphism satisfies $\tau^\sigma = [-1]\circ\tau$ where
$\sigma$ is the nontrivial automorphism of $k[\wx]/k$.  Consequently,
the image $C^t$ is a twist of $C$. The form of the inverse is obtained
by matrix inversion.
\qed
\end{proof}

\begin{commentcode}
\noindent{\bf Code verification.} The twisting isomorphisms are verified symbolically
in the code of Tables~\ref{code:twisting-isom-init}, \ref{code:twisting-isom-mu4},
\ref{code:twisting-isom-semisplit-mu4}, and~\ref{code:twisting-isom-split-mu4}.

\begin{table} %[p]
\caption{Initialization of twisting extension and projective space.}
\label{code:twisting-isom-init}
\begin{sageblock}
# First we initialize the generic field, twisting extension,
# and create the ambient projective space.
magma.eval("""
FF<c,a> := FunctionField(ZZ,2);
r := 1/c^8; s := c^4; D := 4*a+1;
PF<x> := PolynomialRing(FF);
KK<w> := quo< PF | x^2-x-a >;
w1 := w; w2 := 1-w; dd := w1-w2; assert dd^2 eq D;
PP<X0,X1,X2,X3> := ProjectiveSpace(KK,3);
""")
\end{sageblock}
\end{table}

\begin{table} %[p]
\caption{Verification of twisting isomorphism for $\bbmu_4$-normal form.}
\label{code:twisting-isom-mu4}
\begin{sageblock}
# Verify the theorem for the mu_4-normal form
magma.eval("""
CC := EllipticCurve_Mu4_NormalForm(PP,r);
OC := CC!0;
assert CC eq Curve(PP,[X0^2-r*X2^2-X1*X3,X1^2-X3^2-X0*X2]);
Ct := EllipticCurve_Twisted_Mu4_NormalForm(PP,r,a);
Ot := Ct!0;
assert Ct eq Curve(PP,[X0^2-D*r*X2^2-(X1*X3-a*(X1-X3)^2),X1^2-X3^2-X0*X2]);
tau_CC_Ct := map< CC->Ct |
    [dd*X0,w1*X1-w2*X3,X2,w1*X3-w2*X1],
    [X0,w1*X1+w2*X3,dd*X2,w2*X1+w1*X3] >;
tau_Ct_CC := Inverse(tau_CC_Ct);
assert tau_CC_Ct(OC) eq Ot;
assert tau_Ct_CC(Ot) eq OC;
""")
\end{sageblock}
\end{table}

\begin{table} %[p]
\caption{Verification of twisting isomorphism for semisplit $\bbmu_4$-normal form.}
\label{code:twisting-isom-semisplit-mu4}
\begin{sageblock}
# Verify the theorem for the semisplit mu_4-normal form
magma.eval("""
CC := EllipticCurve_Semisplit_Mu4_NormalForm(PP,s);
OC := CC!0;
assert CC eq Curve(PP,[X0^2-X2^2-X1*X3,X1^2-X3^2-s*X0*X2]);
Ct := EllipticCurve_Twisted_Semisplit_Mu4_NormalForm(PP,s,a);
Ot := Ct!0;
assert Ct eq Curve(PP,[X0^2-D*X2^2-X1*X3+a*(X1-X3)^2,X1^2-X3^2-s*X0*X2]);
tau_CC_Ct := map< CC->Ct |
    [dd*X0,w1*X1-w2*X3,X2,w1*X3-w2*X1],
    [X0,w1*X1+w2*X3,dd*X2,w2*X1+w1*X3] >;
tau_Ct_CC := Inverse(tau_CC_Ct);
assert tau_CC_Ct(OC) eq Ot;
assert tau_Ct_CC(Ot) eq OC;
""")
\end{sageblock}
\end{table}

\begin{table} %[p]
\caption{Verification of twisting isomorphism for split $\bbmu_4$-normal form.}
\label{code:twisting-isom-split-mu4}
\begin{sageblock}
# Verify the theorem for the split mu_4-normal form
magma.eval("""
CC := EllipticCurve_Split_Mu4_NormalForm(PP,c);
OC := CC!0;
assert CC eq Curve(PP,[X0^2-X2^2-c^2*X1*X3,X1^2-X3^2-c^2*X0*X2]);
Ct := EllipticCurve_Twisted_Split_Mu4_NormalForm(PP,c,a);
Ot := Ct!0;
assert Ct eq Curve(PP,[X0^2-D*X2^2-c^2*(X1*X3-a*(X1-X3)^2),X1^2-X3^2-c^2*X0*X2]);
tau_CC_Ct := map< CC->Ct |
    [dd*X0,w1*X1-w2*X3,X2,w1*X3-w2*X1],
    [X0,w1*X1+w2*X3,dd*X2,w2*X1+w1*X3] >;
tau_Ct_CC := Inverse(tau_CC_Ct);
assert tau_CC_Ct(OC) eq Ot;
assert tau_Ct_CC(Ot) eq OC;
""")
\end{sageblock}
\end{table}
\end{commentcode}

\noindent{\bf Remark.}
In characteristic $2$ we have $D = \delta = 1$, and the twisted split
$\bbmu_4$-normal form is
$
X_0^2 + X_2^2 = c^2 (X_1 X_3 + a (X_1 + X_3)^2),\ X_1^2 + X_3^2 = c^2 X_0 X_2,
$
with associated twisting morphism
$$
(X_0:X_1:X_2:X_3) \longmapsto (X_0 : \wy X_1 + \wx X_3 : X_2 : \wx X_1 + \wy X_3).
$$
Over a field of characteristic different from $2$, we have an isomorphism
with the twisted Edwards normal form.

\begin{theorem}
\label{thm:twisted-mu4-Edwards-isoms}
Let $C^t$ be an elliptic curve in twisted $\bbmu_4$-normal form
$$
X_0^2 - D r X_2^2 = X_1 X_3 - a (X_1 - X_3)^2,\ X_1^2 - X_3^2 = X_0 X_2,
$$
with parameters $(r,a)$ over a field of characteristic different from $2$.
Then $C^t$ is isomorphic to the twisted Edwards curve
$$
X_0^2 - 16 DrX_3^2 = - D X_1^2 + X_2^2
$$
with parameters $(-D,-16Dr)$, via the isomorphism $C^t \rightarrow E$:
$$
(X_0:X_1:X_2:X_3) \longmapsto (4 X_0 : 2(X_1 - X_3) : 2(X_1 + X_3) : X_2),
$$
and inverse
$$
(X_0:X_1:X_2:X_3) \longmapsto (X_0 : X_1 + X_2 : 4 X_3 : -X_1 + X_2).
$$
\end{theorem}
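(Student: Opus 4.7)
The plan is a direct substitution, motivated by the untwisted isomorphism recalled in Section~\ref{sec:mu4-normal-form}. Note that the stated inverse map $(X_0:X_1:X_2:X_3) \mapsto (X_0 : X_1 + X_2 : 4X_3 : -X_1 + X_2)$ does not involve the twisting parameter $a$ at all and is precisely the change of variables already used (in the opposite direction) to pass from extended Edwards coordinates to $\bbmu_4$-normal form in the untwisted case. So the real content of the theorem is that this \emph{same} linear map, when applied to the twisted $\bbmu_4$-normal form, lands in a twisted Edwards model, with parameters determined by $a$ through $D = 1 + 4a$.

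For the forward direction, I would substitute $(X_0':X_1':X_2':X_3') = (4X_0 : 2(X_1-X_3) : 2(X_1+X_3) : X_2)$ into the implicit companion relation $X_0' X_3' = X_1' X_2'$ of the extended Edwards embedding: this gives instantly $4X_0 X_2 = 4(X_1^2 - X_3^2)$, i.e.\ the second defining equation of $C^t$. Substituting into the main Edwards equation $X_0'^2 + D X_1'^2 - X_2'^2 - 16Dr X_3'^2 = 0$ then reduces, after collecting terms, to the elementary polynomial identity
$$D(X_1-X_3)^2 - (X_1+X_3)^2 = -4\bigl(X_1 X_3 - a(X_1-X_3)^2\bigr),$$
which is immediate from $D = 1 + 4a$. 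With this identity in hand, the Edwards equation collapses, up to the factor $4$, to the first defining equation of $C^t$.

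To finish, I would check that the two given maps compose (in either order) to the scalar matrix $4I$ on $\PP^3$ --- this is visible immediately at the matrix level by the same kind of two-line calculation $(-2X_3)(2X_1)$-style expansion --- so they are mutually inverse linear automorphisms of $\PP^3$ and thus define a $k$-rational linear isomorphism $C^t \to E$ of projective models. The only real content is the one-line polynomial identity above; there is no conceptual obstacle. The characteristic hypothesis enters only to guarantee that $4$ is a unit, so that the matrices are actually invertible over $k$.
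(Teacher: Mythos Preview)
Your argument is correct and complete: the substitution works exactly as you outline, and the identity $D(X_1-X_3)^2 - (X_1+X_3)^2 = -4\bigl(X_1X_3 - a(X_1-X_3)^2\bigr)$, immediate from $D = 1+4a$, is indeed the only nontrivial step. The mutual invertibility check via the scalar $4I$ is also fine, and your remark about where the characteristic hypothesis enters is accurate.

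The paper's proof takes a different, more structural route. Instead of substituting into the equations, it identifies the stated map as the composite of three linear isomorphisms already in hand: the inverse of the twisting isomorphism $\tau^{-1}: C^t \to C$ from Theorem~\ref{thm:twisted-split-mu4-curve}, the untwisted isomorphism $C \to E$ from $\bbmu_4$-normal form to the Edwards model recalled at the start of Section~\ref{sec:mu4-normal-form}, and the diagonal twisting map $(X_0:X_1:X_2:X_3) \mapsto (\delta X_0 : X_1 : \delta X_2 : X_3)$ sending the Edwards curve to its twist. The first and last factors are only defined over $k[\wx]$, but their composite with the middle map is visibly $k$-rational once one multiplies the matrices and watches the $\wx$'s cancel. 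Your direct verification is more elementary and avoids the extension field entirely; the paper's argument, on the other hand, explains the provenance of the map and makes clear \emph{why} the same change of variables as in the untwisted case should continue to work.
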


\begin{proof}
The linear transformation is the compositum of the above linear
transformations with the morphism
$
(X_0:X_1:X_2:X_3) \longmapsto ( \delta X_0 : X_1  : \delta X_2 : X_3)
$
from the Edwards curve to its twist.
\qed
\end{proof}

\begin{commentcode}
\begin{table} %[p]
\caption{Isomorphism of $\bbmu_4$-normal form with Edwards normal form}
\label{code:isom-twisted-split-mu4-twisted-edwards}
\begin{sageblock}
magma.eval("""
Ct := EllipticCurve_Twisted_Mu4_NormalForm(PP,r,a);
Et := EllipticCurve_Twisted_Edwards_NormalForm(PP,-16*D*r,-D);
BCE := [4*X0, 2*(X1 - X3), 2*(X1 + X3), X2];
BEC := [X0, X1 + X2, 4*X3, -X1 + X2];
iso_Ct_Et := map< Ct->Et | BCE, BEC >;
""")
\end{sageblock}
\end{table}
\end{commentcode}

\noindent
For completeness we provide an isomorphic model in Weierstrass form:

\begin{theorem}
Let $C^t$ be an elliptic curve in twisted split $\bbmu_4$-normal form
with parameters $(r,a)$.  Then $C^t$ is isomorphic to the elliptic curve
$$
y^2 + xy = x^3 + (a-8Dr)x^2 + 2D^2r(8r-3)x - D^3r(1-4r)
$$
in Weierstrass form, where $D = 4a+1$.  The isomorphism is given by
the map which sends $(X_0:X_1:X_2:X_3)$ to
$$
\left(
D\big( U_0 - 4r (U_0 + U_2) \big) :
D\big( U_1 - 2r (8 U_1 + 2 U_0 - U_2) \big) :
U_2 - 2 U_0) \right),
$$
where $(U_0,U_1,U_2,U_3) = (X_1 - X_3, X_0 + X_3, X_2, X_1 + X_3)$.
\end{theorem}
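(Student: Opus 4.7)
My plan is to exhibit the Weierstrass model as the output of a composition of already-established maps, and then verify the stated closed-form expressions by a polynomial identity check. Applying the inverse of $\tau$ from Theorem~\ref{thm:twisted-split-mu4-curve} moves $C^t$ to an untwisted $\bbmu_4$-normal form curve $C$ over $k[\wx]$; Theorem~\ref{thm:twisted-mu4-Edwards-isoms} (specialized to $a = 0$, so $D = 1$) puts $C$ onto a twisted Edwards curve with parameters $(-1,-16r)$; a classical Edwards-to-Weierstrass substitution then produces a Weierstrass model for $C$; and a Weierstrass-side quadratic twist by $D = 4a+1$ descends the composite back to $k$. This composite predicts the Weierstrass coefficients $(a_2,a_4,a_6) = (a - 8Dr,\,2D^2r(8r-3),\,-D^3r(1-4r))$ as the quadratic-in-$a$ twist deformation of the $a = 0$ case.

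Once the target curve is predicted, the remainder of the argument is verification. The identity check is immediate: at $O = (1:1:0:1)$ one has $U_0 = X_1 - X_3 = 0$ and $U_2 = X_2 = 0$, so $x = 0$, $z = 0$, and $y = 2D(1 - 16r)$, which is nonzero generically; hence $O$ maps to the Weierstrass point at infinity $(0:1:0)$. For the curve equation, substitute
$$x = D(U_0 - 4r(U_0 + U_2)),\ y = D(U_1 - 2r(8U_1 + 2U_0 - U_2)),\ z = U_2 - 2U_0$$
into $F = y^2z + xyz - x^3 - (a-8Dr)x^2z - 2D^2r(8r-3)xz^2 + D^3r(1-4r)z^3$ and check that $F$ lies in the homogeneous ideal generated by the two defining quadrics of $C^t$. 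Since the map is linear in $(X_0,X_1,X_2,X_3)$ with no common zero on $C^t$, this together with the identity check shows it is an isomorphism onto the Weierstrass curve.

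The main obstacle is simply the size of the final identity: $F$ expands to a cubic in four variables with coefficients polynomial in $r$ and $a$, and reducing modulo the two quadric relations is bulky. Consistent with the symbolic verifications used throughout the paper for similar isomorphism and twisting statements, the natural way to dispatch this step is a computer algebra check. A secondary point that deserves care, though it is already set up in Theorem~\ref{thm:twisted-split-mu4-curve}, is that the descent to $k$ must use the Artin--Schreier form of the quadratic twist in characteristic two, so the Weierstrass-side twist by $D$ must be interpreted accordingly; this is what forces the coefficient of $X^2$ in the final Weierstrass model to include the summand $a$ rather than a pure multiple of $Dr$.
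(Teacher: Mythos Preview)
Your proposal is correct and matches the paper's approach: the paper's proof consists entirely of the sentence ``A symbolic verification is carried out by the Echidna code implemented in Magma,'' and you likewise reduce the claim to a polynomial identity check dispatched by computer algebra. The conceptual derivation you sketch (untwist via $\tau^{-1}$, pass through the Edwards model of Theorem~\ref{thm:twisted-mu4-Edwards-isoms}, then re-twist on the Weierstrass side) is helpful motivation the paper omits; the one point to tighten is that base-point-freeness plus the identity check only gives a morphism of pointed curves, so to conclude it is an \emph{isomorphism} you should also exhibit or verify an inverse, as the paper's code does.
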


\begin{proof}
A symbolic verification is carried out by the Echidna code~\cite{Kohel-Echidna}
implemented in Magma~\cite{Magma}.
\qed
\end{proof}

\begin{commentcode}
\begin{proof}
The symbolic verification is carried out by the code of Table~\ref{weierstrass-isom-charp}.
\qed
\end{proof}

\begin{table} %[p]
\caption{Isomorphism with Weierstrass model}
\label{weierstrass-isom-charp}
\begin{sageblock}
magma.eval("""
FF<r,a> := FunctionField(ZZ,2); D := 4*a+1;
PP<X0,X1,X2,X3> := ProjectiveSpace(FF,3);
CT := EllipticCurve_Twisted_Mu4_NormalForm(PP,r,a);
WT := EllipticCurve([1,a-8*D*r,0,2*D^2*r*(8*r-3),-D^3*r*(1-4*r)]);
U0,U1,U2,U3 := Explode([X1-X3, X0+X3, X2, X1+X3]);
BB_CT_WT := [D*(U0-4*r*(2*U0+U2)),D*(U1-2*r*(8*U1+2*U0-U2)),U2-2*U0];
P2<X,Y,Z> := AmbientSpace(WT);
S0, S1 := Explode([X + 4*D*r*Z, 2*X + D*(1-8*r)*Z]);
T0, T1 := Explode([X + 2*Y, 4*X + D*Z]);
BB_WT_CT := [S0*T0, (S1*T0+S0*T1)/2, S1*T1, (S1*T0-S0*T1)/2];
iso_CT_WT := map< CT->WT | BB_CT_WT, BB_WT_CT >;
""")
\end{sageblock}
\end{table}
\end{commentcode}

\noindent
Specializing to characteristic~$2$, we obtain the following corollary.

\begin{corollary}
\label{cor:binary-weierstrass}
Let $C^t$ be a binary elliptic curve in twisted $\bbmu_4$-normal form
$$
X_0^2 + b X_2^2 = X_1 X_3 + a X_0 X_2,\ X_1^2 + X_3^2 = X_0 X_2,
$$
with parameters $(r,a) = (b,a)$. Then $C^t$ is isomorphic to the
elliptic curve
$$
y^2 + xy = x^3 + ax^2 + b,
$$
in Weierstrass form via the map
$(X_0:X_1:X_2:X_3) \mapsto ( X_1 + X_3 : X_0 + X_1 : X_2)$.
On affine points $(x,y)$ the inverse is
$
(x,y) \longmapsto (x^2 : x^2 + y : 1 : x^2 + y + x).
$
\end{corollary}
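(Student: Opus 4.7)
The plan is to derive this as a specialization of Theorem~\ref{thm:twisted-split-mu4-curve} to characteristic~2, followed by a direct verification of the map to Weierstrass form. First I would specialize the twisted $\bbmu_4$-normal form with $r = b$ to a field of characteristic~2. There $D = 4a+1 = 1$, so the first defining equation reads $X_0^2 + b X_2^2 = X_1 X_3 + a(X_1 - X_3)^2$, while the second is $X_1^2 + X_3^2 = X_0 X_2$. Observing that $(X_1 - X_3)^2 = X_1^2 + X_3^2$ in characteristic~$2$, and substituting the second relation, the quadratic term $a(X_1 - X_3)^2$ collapses to $a X_0 X_2$, recovering the stated form $X_0^2 + b X_2^2 = X_1 X_3 + a X_0 X_2$.

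Next I would check the forward map $(X_0 : X_1 : X_2 : X_3) \mapsto (X_1 + X_3 : X_0 + X_1 : X_2)$. Setting $X_2 = 1$ and writing $x = X_1 + X_3$, $y = X_0 + X_1$, the key identity is that the second curve equation gives $X_0 = X_1^2 + X_3^2 = (X_1 + X_3)^2 = x^2$, since we are in characteristic~$2$. A direct expansion then yields
$$
y^2 + xy = (X_0 + X_1)^2 + (X_1 + X_3)(X_0 + X_1) = X_0^2 + X_0(X_1 + X_3) + X_1 X_3,
$$
and the first curve equation supplies $X_1 X_3 = X_0^2 + b + a X_0$, so
$$
y^2 + xy = X_0(X_1 + X_3) + a X_0 + b = x^3 + a x^2 + b,
$$
confirming the Weierstrass model.

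For the inverse, the relation $X_0 = x^2$ combined with $y = X_0 + X_1$ forces $X_1 = x^2 + y$, and then $x = X_1 + X_3$ gives $X_3 = x^2 + y + x$, matching the stated formula $(x, y) \mapsto (x^2 : x^2 + y : 1 : x^2 + y + x)$. One checks that this lands on $C^t$ by substituting into both defining quadrics and reducing using $y^2 + xy = x^3 + a x^2 + b$. Finally, composing the two maps in either order returns the identity on points with $X_2 \ne 0$, so this is a birational isomorphism, and since both source and target are smooth projective curves of genus~$1$ it extends to a biregular isomorphism mapping identity to identity.

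There is no genuine obstacle here; the only substantive content is the recognition that in characteristic~$2$ the second defining quadric forces $X_0 = (X_1 + X_3)^2 X_2^{-1}$, which both simplifies the twisted curve equation to its stated binary shape and supplies the $x^2$ appearing in the inverse formula. Everything else is a one-line algebraic manipulation of the two quadrics.
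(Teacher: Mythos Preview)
Your proof is correct and follows essentially the same route as the paper: specialize to characteristic~$2$ (where $D=1$), use the key identity $(X_1+X_3)^2 = X_0X_2$ from the second quadric, and observe that the resulting birational correspondence between smooth projective curves extends to an isomorphism. The one difference is that the paper obtains the forward map and the Weierstrass equation by specializing the preceding general-characteristic theorem (itself established only by symbolic verification in Magma), whereas you verify the forward map by a direct two-line computation. Your version is therefore slightly more self-contained and elementary, at no extra cost; the paper's version has the advantage of placing the binary case visibly as a degeneration of the all-characteristic statement.
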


\begin{proof}
By the previous theorem, since $D = 1$ in characteristic~2, the
Weierstrass model simplifies to $y^2 + xy = x^3 + ax^2 + b$, and
the map to
$$
(X_0:X_1:X_2:X_3) \longmapsto
(U_0:U_1:U_2) = \left( X_1 + X_3 : X_0 + X_1 : X_2 \right).
$$
The given map on affine points is easily seen to be a birational
inverse, valid for $X_2 = 1$, in view of the relation $(X_1 + X_3)^2
= X_0 X_2$, well-defined outside the identity. Consequently,
it extends uniquely to an isomorphism.
\qed
\end{proof}

As a consequence of this theorem, any ordinary binary curve (with
$j = 1/b \ne 0$) can be put in twisted $\bbmu_4$-normal form, via
the map on affine points:
$$
(x,y) \longmapsto (x^2 : x^2 + y : 1 : x^2 + y + x).
$$
In particular all algorithms of this work (over binary fields)
are applicable to the binary NIST curves, which permits
backward compatibility and improved performance.

\begin{commentcode}
\vspace{2mm}
\noindent{\bf Code verification.}
The Weierstrass models can be verified symbolically, in Magma~\cite{Magma}
using the Echidna~\cite{Kohel-Echidna} code, as in
Table~\ref{weierstrass-isom-charp} when $2$ is invertible and
Table~\ref{weierstrass-isom-char2} in characteristic $2$.

\begin{table} %[p]
\caption{Isomorphism with Weierstrass model in characteristic 2}
\label{weierstrass-isom-char2}
\begin{sageblock}
magma.eval("""
FF<b,a> := FunctionField(FiniteField(2),2);
PP<X0,X1,X2,X3> := ProjectiveSpace(FF,3);
CT := EllipticCurve_Twisted_Mu4_NormalForm(PP,b,a);
WT := EllipticCurve([1,a,0,0,b]);
P2<X,Y,Z> := AmbientSpace(WT);
F,G,H := Explode([Y^2 + a*Y*Z + b*Z^2,X^2 + Y*Z,X + a*Z]);
BB_WT_CT := [ [X^2, G, Z^2, G + X*Z],[F + Y*H, F, Z*H, F + X*H] ];
iso_CT_WT := map< CT->WT | [X1+X3,X0+X1,X2], BB_WT_CT >;
""")
\end{sageblock}
\end{table}
\end{commentcode}

%%%%%%%%%%%%%%%%%%%%%%%%%%%%%%%%%%%%%%%%%%%%%%%%%%%%%%%%%%%%%%%%%%%%%%%%%%%%%%%%
%%%%%%%%%%%%%%%%%%%%%%%%%%%%%%%%%%%%%%%%%%%%%%%%%%%%%%%%%%%%%%%%%%%%%%%%%%%%%%%%
\section{Addition algorithms}
\label{sec:addition-algorithms}
%%%%%%%%%%%%%%%%%%%%%%%%%%%%%%%%%%%%%%%%%%%%%%%%%%%%%%%%%%%%%%%%%%%%%%%%%%%%%%%%
%%%%%%%%%%%%%%%%%%%%%%%%%%%%%%%%%%%%%%%%%%%%%%%%%%%%%%%%%%%%%%%%%%%%%%%%%%%%%%%%

We now consider the addition laws for twisted split $\bbmu_4$-normal form.
In the application to prime finite fields of odd characteristic~$p$
(see below for considerations in characteristic 2), under the GRH,
Lagarias, Montgomery and Odlyzko~\cite{LMO-nonresidue} prove a generalization
of the result of Ankeny~\cite{Ankeny}, under which we can conclude
that the least quadratic nonresidue $D \equiv 1 \bmod 4$ is in $O(\log^2(p))$,
and the average value of $D$ is $O(1)$.  Consequently, for a curve
over a finite prime field, one can find small twisting parameters
for constructing the quadratic twist. With this in mind, we ignore
all multiplications by constants $a$ and $D = 4a + 1$.

\begin{theorem}
\label{thm:P1xP1-isomorphism}
Let $C^t$ be an elliptic curve in twisted split $\bbmu_4$-normal form:
$$
X_0^2 - D X_2^2 = c^2 (X_1 X_3 - a (X_1 - X_3)^2),\ X_1^2 - X_3^2 = c^2 X_0 X_2.
$$
over a ring in which $2$ is a unit.
The projections $\pi_1: C^t \rightarrow \PP^1$, with coordinates $(X,Z)$,
given by
$$
\pi_1((X_0:X_1:X_2:X_3)) =
\big\{ ( c X_0  : X_1 + X_3), ( X_1 - X_3 : c X_2 ) \big\},
$$
and $\pi_2: C^t \rightarrow \PP^1$, with coordinates $(Y,W)$,
given by
$$
\pi_2((X_0:X_1:X_2:X_3)) =
\big\{ ( c X_0 : X_1 - X_3 ), ( X_1 + X_3 : c X_2 ) \big\},
$$
determine an isomorphism $\pi_1 \times \pi_2$ with its image:
$$
((c^2/2)^2 X^2 - Z^2)W^2 = D((c^2/2)^2 Z^2 - X^2) Y^2
$$
in $\PP^1 \times \PP^1$, with inverse
$$
\sigma((X:Z),(Y:W)) = ( 2XY: c(XW + ZY): 2ZW : c(ZY - XW) ).
$$
\end{theorem}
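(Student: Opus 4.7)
The plan is to verify that $\pi_1 \times \pi_2$ and $\sigma$ are mutually inverse birational maps between $C^t$ and the stated bidegree-$(2,2)$ variety in $\PP^1 \times \PP^1$, and thereby define an isomorphism of smooth projective curves.

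First I would check that $\pi_1$ and $\pi_2$ are well-defined morphisms $C^t \to \PP^1$. Each is given by a pair of alternative representatives, and for $\pi_1$ the two forms $(cX_0:X_1+X_3)$ and $(X_1-X_3:cX_2)$ define the same point of $\PP^1$ precisely when $c^2 X_0 X_2 = (X_1+X_3)(X_1-X_3)$, which is exactly the second defining equation of $C^t$. The same identity handles $\pi_2$. One checks that the two representatives have no common zero locus on $C^t$ (the simultaneous vanishing of $cX_0$, $X_1 \pm X_3$, and $cX_2$ forces the zero vector), so each $\pi_i$ is everywhere defined.

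The algebraic core of the proof is the reorganization of the first defining equation of $C^t$ via the identity $D = 4a + 1$:
$$
c^2 X_1 X_3 - c^2 a (X_1 - X_3)^2 = \frac{c^2}{4}\bigl((X_1 + X_3)^2 - D (X_1 - X_3)^2\bigr).
$$
Substituting the representatives $(X:Z) = (cX_0 : X_1 + X_3)$ and $(Y:W) = (cX_0 : X_1 - X_3)$ and multiplying through by the appropriate monomial in $X_0$ to restore bihomogeneity in each factor, the two defining equations of $C^t$ combine into a single bidegree-$(2,2)$ relation in $X, Z, Y, W$, which (up to clearing a factor of~$4$) is the claimed image equation. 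For the inverse I would substitute $\sigma((X:Z),(Y:W)) = (2XY : c(XW + ZY) : 2ZW : c(ZY - XW))$ into the equations of $C^t$: the derived identities $X_1 + X_3 = 2cZY$ and $X_1 - X_3 = 2cXW$ make $X_1^2 - X_3^2 = c^2 X_0 X_2$ a tautology, and reduce the first equation of $C^t$ to the same bidegree-$(2,2)$ relation. Finally, $\sigma \circ (\pi_1 \times \pi_2)$ computes to $(2c^2 X_0^2 : 2c^2 X_0 X_1 : 2c^2 X_0 X_2 : 2c^2 X_0 X_3)$, which is the original projective point. Since both compositions are the identity as rational maps of smooth projective varieties, they extend to inverse isomorphisms.

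The main obstacle is bookkeeping: one must carefully track the two independent projective scalings for the two factors of $\PP^1 \times \PP^1$ while comparing a single defining equation of $C^t$ against a bihomogeneous target equation of bidegree $(2,2)$. The identity $D = 4a + 1$ must be invoked at exactly the right moment to convert the Artin--Schreier twisting term $X_1 X_3 - a(X_1 - X_3)^2$ into the difference of squares $\tfrac{1}{4}((X_1+X_3)^2 - D(X_1-X_3)^2)$ that decouples the $(X:Z)$ and $(Y:W)$ factors. Once these two ingredients are in place, the remaining verifications are mechanical.
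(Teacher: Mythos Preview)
Your proposal is correct and follows essentially the same route as the paper. The paper's proof is terser: it observes that $\sigma$ and $\pi_1\times\pi_2$ already give inverse isomorphisms between all of $\PP^1\times\PP^1$ and the smooth quadric surface $X_1^2 - X_3^2 = c^2 X_0 X_2$ in $\PP^3$, so that the remaining defining equation of $C^t$ need only be substituted to obtain the image hypersurface; your curve-level verification and the identity $X_1X_3 - a(X_1-X_3)^2 = \tfrac{1}{4}\bigl((X_1+X_3)^2 - D(X_1-X_3)^2\bigr)$ unpack exactly this substitution.
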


\begin{proof}
The morphisms $\sigma$ and $\pi_1 \times \pi_2$ determine
isomorphisms of $\PP^1 \times \PP^1$ with the surface
$X_1^2 - X_3^2 = c^2 X_0 X_2$ in $\PP^3$, and substitution in
the first equation for $C^t$ yields the above hypersurface
in $\PP^1 \times \PP^1$.
\qed
\end{proof}

The twisted split $\bbmu_4$-normal form has $2$-torsion subgroup
generated by $Q = (-c:1:0:1)$ and $R = (0:-1:c:1)$, with $Q + R
= (0:-1:-c:1)$. Over any extension containing a square root
$\varepsilon$ of $-D$, the point $S = (c: -\varepsilon: 0:\varepsilon)$
is a point of order $4$ such that $2S = Q$.

\begin{theorem}
\label{thm:twisted-mu4-addition-law-projections}
Let $C^t$ be an elliptic curve in twisted split $\bbmu_4$-normal form
over a ring in which $2$ is a unit.
The projections $\pi_1$ and $\pi_2$ determine two-dimensional spaces
of bilinear addition law projections:
$$
\begin{array}{r@{\,}c@{\,}l}
\pi_1 \circ \mu(x,y) & = &
\left\{
\begin{array}{l}
\fs_0 = ( U_{13} - U_{31} : U_{20} - U_{02} ),\\
\fs_2 = ( U_{00} + DU_{22} : U_{11} + U_{33} + 2aV_{13} ),
\end{array}
\right. \\[4mm]
\pi_2 \circ \mu(x,y) & = &
\left\{
\begin{array}{l}
\fs_1 = ( U_{13} + U_{31} - 2aV_{13} : U_{02} + U_{20} ),\\
\fs_3 = ( U_{00} - D U_{22} : U_{11} -  U_{33} ),
\end{array}
\right.
\end{array}
$$
where $U_{k\ell} = X_k Y_\ell$ and $V_{k\ell} = (X_k - X_\ell)(Y_k - Y_\ell)$.
The exceptional divisors of the $\fs_j$ are of the form $\Delta_{T_j}
+ \Delta_{T_j+Q}$, where $T_0 = O,\; T_1 = S + R,\; T_2 = R,\; T_3 = S$.
\end{theorem}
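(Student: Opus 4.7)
My plan is twofold: first verify the explicit bilinear formulas by direct symbolic computation, and second determine the exceptional divisors by specialization to the identity, exactly as in the proof of Theorem \ref{thm:mu4-addition-laws}.

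Since Theorem \ref{thm:twisted-split-mu4-curve} provides an explicit twisting isomorphism $\tau: C \to C^t$ and Theorem \ref{thm:mu4-addition-laws} gives a full basis of addition laws on $C$, one natural route is to pull back each $\fs_j$ through $\tau$ on both factors, push forward on the output, and then compose with $\pi_i$. The pulled-back laws are a priori defined only over the quadratic extension $k[\wx]$, but the projections $\pi_1, \pi_2$ are designed so that the Galois-conjugate pairs of coordinates collapse to $k$-rational monomial expressions in the $U_{k\ell}$ and the correction term $V_{k\ell} = (X_k - X_\ell)(Y_k - Y_\ell)$, which absorbs the twist-specific contribution $a(X_1 - X_3)^2$ appearing in the defining ideal of $C^t \times C^t$. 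Concretely, I would verify the identities
$$
\pi_1 \circ \mu = (\fs_0 \text{ or } \fs_2), \qquad \pi_2 \circ \mu = (\fs_1 \text{ or } \fs_3),
$$
by checking that each $\fs_j$ represents $\pi_i \circ \mu$ modulo the bihomogeneous ideal of $C^t \times C^t$, using a symbolic computer algebra check in the spirit of the Echidna computations already employed for the twisting isomorphism.

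For the exceptional divisors, I would follow the argument used in Theorem \ref{thm:mu4-addition-laws}: by Theorem 10 of Kohel \cite{Kohel}, the exceptional divisor of any addition law is a sum of divisors of the form $\Delta_P$. For a $\pi_i$-projection, the degree of the exceptional divisor drops by a factor of two relative to the unprojected law, since $\pi_i$ has degree two; thus the sum has only two terms, say $\Delta_{T_j} + \Delta_{T_j'}$. Substituting $y = O$ (by Corollary 11 of Kohel \cite{Kohel}) into each $\fs_j$ yields a pair of linear forms on $C^t$ whose common zero locus is identified as a pair of $C^t[2]$-cosets. Since $T_j$ and $T_j'$ necessarily have the same $\pi_i$-image, they differ by the kernel $Q$ of $\pi_i$, giving the claimed $\Delta_{T_j} + \Delta_{T_j + Q}$. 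A direct evaluation at $y = O$ in $\fs_0$ then pins down $T_0 = O$, and analogous evaluations yield $T_1 = S + R$, $T_2 = R$, $T_3 = S$.

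The main technical obstacle is the $k$-rationality of the formulas: while the four pulled-back laws over $k[\wx]$ split naturally into Galois-conjugate pairs, showing that each pair assembles into a \emph{bilinear} representative with the compact $U_{k\ell}, V_{k\ell}$ form requires careful reduction modulo the bihomogeneous ideal of $C^t \times C^t$. In particular, the term $V_{13}$ enters precisely because the twisting equation reads $X_0^2 - D X_2^2 = c^2(X_1 X_3 - a(X_1 - X_3)^2)$, so the pulled-back laws produce $X_1 X_3$ monomials that must be rearranged through this identity to expose the manifestly bilinear shape. This is exactly the step most efficiently outsourced to symbolic verification, with the remainder of the proof tracking standard addition-law theory from the earlier sections.
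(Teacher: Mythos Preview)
Your proposal is correct and follows essentially the same strategy as the paper: symbolic verification of the bilinear forms, followed by identification of the exceptional divisors via the substitution $y = O$ using Corollary~11 of Kohel~\cite{Kohel}. The one point where the paper is sharper is the structural claim about the exceptional divisors: rather than your heuristic that ``the degree of the exceptional divisor drops by a factor of two since $\pi_i$ has degree two,'' the paper invokes Theorem~26 and Corollary~27 of Kohel~\cite{Kohel} directly, which yield both the two-dimensionality of each space of bilinear addition law projections and the fact that the exceptional divisor of any such projection is $\Delta_{T} + \Delta_{T'}$ with $T + T'$ fixed by the projection (equal to $Q$ for $\pi_1$ and $O$ for $\pi_2$), so that $T' = T + Q$ automatically. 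Your claim that ``$T_j$ and $T_j'$ necessarily have the same $\pi_i$-image'' is not quite the right mechanism; it is the sum $T_j + T_j'$, not their individual images, that is constrained. Your constructive derivation via conjugation by the twisting isomorphism $\tau$ is a perfectly good way to \emph{find} the formulas, but the paper does not carry this out and simply defers to symbolic verification.
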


\begin{proof}
The existence and dimensions of the spaces of bilinear addition law
projections, as well as the form of the exceptional divisors, follows
from Theorem~26 and Corollary~27 of Kohel~\cite{Kohel}, observing
for $j$ in $\{0, 2\}$ that $T_j + (T_j + Q) = Q$ and for $j$ in $\{1, 3\}$
that $T_j + (T_j + Q) = O$.  The correctness of the forms can be verified
symbolically, % (see Table~\ref{code:addition-laws}),
and the pairs $\{T_j,T_j+Q\}$ determined by the substitution
$(Y_0,Y_1,Y_2,Y_3) = (c,1,0,1)$, as in Corollary~11 of
Kohel~\cite{Kohel}. In particular, for $\fs_0$, we obtain the tuple
$
( U_{13} - U_{31}, U_{20} - U_{02} ) = ( X_1 - X_3, c X_2 ),
$
which vanishes on $\{O,Q\} = \{ (c:1:0:1), (-c:1:0:1) \}$, hence
the exceptional divisor is $\Delta_O + \Delta_Q$.
\qed
\end{proof}

Composing the addition law projections of Theorem~\ref{thm:twisted-mu4-addition-law-projections}
with the isomorphism of Theorem~\ref{thm:P1xP1-isomorphism}, and dividing
by 2, we obtain for the pair $(\fs_0,\fs_1)$ the tuple $(Z_0,Z_1,Z_2,Z_3)$
with
%% \begin{short-form}
%% $$
%% \begin{array}{r@{\,}c@{\,}l}
%%     Z_0 & = & (U_{13} - U_{31})(U_{13} + U_{31} + 2aV_{13}),\\
%%     Z_2 & = & -D(U_{02} - U_{20})(U_{02} + U_{20}),\\
%%     Z_1 + Z_3 & = & -c(U_{02} - U_{20})(U_{13} + U_{31} + 2aV_{13}),\\
%%     Z_1 - Z_3 & = & cD(U_{13} - U_{31})(U_{02} + U_{20}),\\
%% \end{array}
%% $$
%% \end{short-form}
%% \begin{long-form}
$$
\begin{array}{r@{\,}c@{\,}lr@{\,}c@{\,}l}
      Z_0 & = & (U_{13} - U_{31})(U_{13} + U_{31} - 2aV_{13}), &
Z_1 + Z_3 & = & -c(U_{02} - U_{20})(U_{13} + U_{31} + 2aV_{13}),\\
      Z_2 & = & -(U_{02} - U_{20})(U_{02} + U_{20}), &
Z_1 - Z_3 & = & -c(U_{13} - U_{31})(U_{02} + U_{20}),\\
\end{array}
$$
%% \end{long-form}
and for the pair $(\fs_2,\fs_3)$ the tuple $(Z_0,Z_1,Z_2,Z_3)$ with
%% \begin{short-form}
%% $$
%% \begin{array}{r@{\,}c@{\,}l}
%% Z_0 & = & (D U_{00} + U_{22})(D U_{00} - U_{22}),\\
%% Z_2 & = & D (U_{11} + U_{33} + 2aV_{13})(U_{11} - U_{33}),\\
%% Z_1 + Z_3 & = & c\,(U_{11} + U_{33} + 2aV_{13})(D U_{00} - U_{22}),\\
%% Z_1 - Z_3 & = & cD(D U_{00} + U_{22})(U_{11} - U_{33}).
%% \end{array}
%% $$
%% \end{short-form}
%% \begin{long-form}
$$
\begin{array}{r@{\,}c@{\,}lr@{\,}c@{\,}l}
      Z_0 & = & (U_{00} + DU_{22})(U_{00} - DU_{22}), &
Z_1 + Z_3 & = & c\,(U_{11} + U_{33} + 2aV_{13})(U_{00} - DU_{22}),\\
      Z_2 & = & (U_{11} + U_{33} + 2aV_{13})(U_{11} - U_{33}), &
Z_1 - Z_3 & = & c(U_{00} + DU_{22})(U_{11} - U_{33}).
\end{array}
$$
%% \end{long-form}
The former have efficient evaluations over a ring in which $2$
is a unit, yielding $(2Z_0, 2Z_1, 2Z_2, 2Z_3)$, and otherwise
we deduce expressions for $(Z_1,Z_3)$:
$$
\begin{array}{l}
Z_1 = c((U_{02} U_{13} - U_{02} U_{31}) - a(U_{02} - U_{20})W_{13}),\\
Z_3 = c((U_{02} U_{31} - U_{20} U_{13}) - a(U_{02} - U_{20})W_{13}),
\end{array}
$$
with $W_{13} = 2(U_{13} + U_{31}) - V_{13}$, and
$$
\begin{array}{l}
Z_1 = c(U_{00} U_{11} - D U_{22} U_{33}) - a(U_{00} - D U_{22})W_{13}),\\
Z_3 = c(U_{00} U_{33} - D U_{22} U_{11}) - a(U_{00} - D U_{22})W_{13}),
\end{array}
$$
with $W_{13} = 2(U_{11} + U_{33}) - V_{13}$, respectively.  We note
that these expressions remain valid over any ring despite the fact
that they were derived via the factorization through the curve in
$\PP^1 \times \PP^1$ which is singular in characteristic~$2$.
\vspace{2mm}

\begin{commentcode}
\noindent{\bf Code verification.} Table~\ref{code:addition-laws} provides
a verification of the correctness of the addition laws.
\vspace{2mm}

\begin{table} %[p]
\caption{Addition laws for split $\bbmu_4$-normal form.}
\label{code:addition-laws}
\begin{sageblock}
magma.eval("""
FF<c,a> := FunctionField(ZZ,2); D := 4*a + 1;
CC := EllipticCurve_Twisted_Split_Mu4_NormalForm(c,a);
mu := AdditionMorphism(CC);
BB := AllDefiningPolynomials(mu)[1];
PPxPP<X0,X1,X2,X3,Y0,Y1,Y2,Y3> := AmbientSpace(Domain(mu));
U00, U11, U22, U33 := Explode([X0*Y0, X1*Y1, X2*Y2, X3*Y3]); // 4M
W13 := (X1 - X3)*(Y1 - Y3); // 1M
A0, A1 := Explode([U00 + D*U22, U00 - D*U22]);
B0, B1 := Explode([U11 - U33, U11 + U33 + 2*a*W13]);
Z0, Z2 := Explode([A0*A1, B0*B1]); // 2M
W1, W3 := Explode([A1*B1, A0*B0]); // 2M
Z1 := c*((U00*U11 - D*U22*U33) + a*A1*W13);
Z3 := c*((U00*U33 - D*U22*U11) + a*A1*W13);
assert BB eq [ 2*Z0, c*(W1 + W3), 2*Z2, c*(W1 - W3) ];
assert BB eq [ 2*Z0, 2*Z1, 2*Z2, 2*Z3 ];
""")
\end{sageblock}
\end{table}
\end{commentcode}

Before evaluating their complexity, we explain the obvious symmetry
of the above equations.  Let $\uptau$ be the translation-by-$R$
automorphism of $C^t$ sending $(X_0:X_1:X_2:X_3)$
to
$$
(X_2:-X_3-2a(X_1+X_3):-DX_0:X_1 + 2a(X_1+X_3)),
$$
and denote also $\uptau$ for the induced automorphism
$$
\uptau((X:Z),(Y:W)) = ((Z:X),(-W:DY))
$$
of its image in $\PP^1 \times \PP^1$. Then for each $(i,j)$ in $(\ZZ/2\ZZ)^2$,
the tuple of morphisms $(\uptau^i \times \uptau^j,\uptau^k)$ such that
$k = i+j$ acts on the set of tuples $(\fs,\fs')$ of addition law projections:
$$
(\uptau^i \times \uptau^j,\uptau^k) \cdot (\fs,\fs') =
\uptau^k \circ ( \fs \circ (\uptau^i \times \uptau^j),\ \fs' \circ (\uptau^i \times \uptau^j) ).
$$

\begin{lemma}
Let $C^t$ be an elliptic curve in split $\bbmu_4$-normal form.
The tuples of addition law projections $(\fs_0,\fs_1)$ and
$(\fs_2,\fs_3)$ are eigenvectors for the action of
$(\uptau \times \uptau, 1)$ and are exchanged, up to scalars,
by the action of $(\uptau \times 1, \uptau)$ and
$(1 \times \uptau, \uptau)$.
\end{lemma}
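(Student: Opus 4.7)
\emph{Plan.} The argument I would give relies on two structural facts supplied by the surrounding theory: the space of bidegree $(2,2)$ addition law projections that factors through $\pi_i\circ\mu$ is two-dimensional, and within it an addition law projection is determined up to scalar by its exceptional divisor (cf.\ Theorem~\ref{thm:twisted-mu4-addition-law-projections} and the cited results of Kohel~\cite{Kohel}). Recall also that $\fs_j$ has exceptional divisor $\Delta_{T_j}+\Delta_{T_j+Q}$, that $\uptau$ is translation by the $2$-torsion point $R$ on $C^t$, and that by direct verification $\pi_i\circ\uptau = \uptau\circ\pi_i$, so that the action of $\uptau$ on $\PP^1\times\PP^1$ given in the paragraph above does intertwine with translation by $R$ on $C^t$.

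The main computation I would then do is record how pullback by $\uptau^i\times\uptau^j$ transforms diagonals: since $\Delta_T = \{(P,T-P)\}$, one checks $(\uptau^i\times\uptau^j)^{*}\Delta_T = \Delta_{T-(i+j)R}$, which equals $\Delta_T$ when $i+j$ is even (using $2R=O$) and $\Delta_{T+R}$ when $i+j$ is odd. Translation by $R$ acts on the $2$-torsion coset $\{T_0,T_1,T_2,T_3\}=\{O,S+R,R,S\}$ as the double transposition $(T_0\;T_2)(T_1\;T_3)$, and in particular preserves each unordered pair $\{T_j,T_j+Q\}$ that makes up the exceptional divisor of $\fs_j$ or swaps it with that of the partner index.

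For the eigenvector claim, I apply $(\uptau\times\uptau,1)$: since $\pi_i\circ\mu\circ(\uptau\times\uptau) = \pi_i\circ\mu$ and the exceptional divisor of each $\fs_j$ is preserved by pullback, the polynomial map $\fs_j\circ(\uptau\times\uptau)$ is another addition law projection for $\pi_i\circ\mu$ with the same exceptional divisor, hence equals $\lambda_j\fs_j$ by the uniqueness statement; the same holds component-wise for $(\fs_2,\fs_3)$. For the exchange claim, I consider $(\uptau\times 1,\uptau)$ applied to $(\fs_0,\fs_1)$: pullback by $\uptau\times 1$ shifts each $T_j$ by $R$, so the exceptional divisor of $\fs_0\circ(\uptau\times 1)$ is that of $\fs_2$, and of $\fs_1\circ(\uptau\times 1)$ is that of $\fs_3$. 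Post-composition by $\uptau$ on $\PP^1\times\PP^1$ is an isomorphism, so it preserves the exceptional divisor while converting the map $\uptau\circ\pi_i\circ\mu$ back to $\pi_i\circ\mu$ (because $\uptau$ has order $2$ on each $\PP^1$), placing the result in the same addition law projection space as $\fs_2$ and $\fs_3$ respectively. Uniqueness up to scalar then forces $(\uptau\times 1,\uptau)\cdot(\fs_0,\fs_1) = (\lambda\fs_2,\mu\fs_3)$, and the argument for $(1\times\uptau,\uptau)$ is identical by symmetry of the pullback calculation in the two factors.

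The main obstacle, and the only place where one must exercise care rather than quote the exceptional-divisor dictionary, is the bookkeeping of pre- versus post-composition: one must verify that the post-composed $\uptau$ on $\PP^1\times\PP^1$ really does convert $\pi_i\circ\mu\circ(\uptau\times 1) = \uptau\circ\pi_i\circ\mu$ back to an addition law projection of $\pi_i\circ\mu$ itself, which is why the statement couples the input action $\uptau^i\times\uptau^j$ with the output action $\uptau^{i+j}$ on $\PP^1\times\PP^1$. Once this compatibility is in hand, the lemma reduces entirely to tracking the permutation action of $R$ on $\{T_0,T_1,T_2,T_3\}$.
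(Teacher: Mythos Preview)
Your proposal is correct and follows essentially the same approach as the paper's proof: both rely on the principle that an addition law projection is determined up to scalar by its exceptional divisor, and then track how the action of $(\uptau^i\times\uptau^j,\uptau^k)$ permutes those divisors (the paper cites Lemma~31 of \cite{Kohel} for this, while you compute the pullback on the $\Delta_{T_j}$ directly). One minor remark: the paper's convention is $\Delta_P = \{(Q,Q-P)\}$ rather than your $\{(P,T-P)\}$, so the pullback formula is $\Delta_{T-(i-j)R}$ rather than $\Delta_{T-(i+j)R}$, but since $R$ is $2$-torsion the parity of $i\pm j$ is the same and your conclusions are unaffected.
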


\begin{proof}
Since an addition law (projection) is uniquely determined by
its exceptional divisor, up to scalars, the lemma follows from
the action of $(\uptau^i \times \uptau^j, \uptau^k)$ on the
exceptional divisors given by Lemma~31 of Kohel~\cite{Kohel},
and can be established directly by substitution.
\qed
\end{proof}

\begin{corollary}
Let $C^t$ be an elliptic curve in twisted split $\bbmu_4$-normal form.
There exists an algorithm for addition with complexity
$11\Mul + 2\mul$ over any ring, and an algorithm with complexity
$9\Mul + 2\mul$ over a ring in which $2$ is a unit.
\end{corollary}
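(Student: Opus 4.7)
The plan is to exhibit explicit straight-line programs evaluating the formulas derived immediately before the corollary. In both algorithms the first step is to compute $U_{ii}=X_iY_i$ for $i\in\{0,1,2,3\}$ and $V_{13}=(X_1-X_3)(Y_1-Y_3)$, for a total of $5\Mul$. The auxiliary forms $A_0=U_{00}+DU_{22}$, $A_1=U_{00}-DU_{22}$, $B_0=U_{11}-U_{33}$, and $B_1=U_{11}+U_{33}+2aV_{13}$ are then free, modulo the constant-scalings by $D$ and $2a$ which we ignore following the remark preceding the theorem.

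For a ring in which $2$ is a unit, the algorithm returns the tuple $(2Z_0,\,c(W_1+W_3),\,2Z_2,\,c(W_1-W_3))$, where $W_1=A_1B_1$ and $W_3=A_0B_0$. Since $2$ is a unit, this represents the same projective point as $(Z_0,Z_1,Z_2,Z_3)$, by the identities $Z_0=A_0A_1$, $Z_2=B_0B_1$, $2Z_1=c(W_1+W_3)$ and $2Z_3=c(W_1-W_3)$ that were recorded in the preceding discussion. The additional cost is $1\Mul$ each for $Z_0$, $Z_2$, $W_1$, and $W_3$, plus $2\mul$ for the two scalings by $c$, yielding $9\Mul+2\mul$.

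Over an arbitrary ring, scaling by $2$ is not projectively innocuous (in characteristic $2$ it even produces the forbidden zero tuple), so we must output $(Z_0,Z_1,Z_2,Z_3)$ itself. The key identity is $Z_1+Z_3=cA_1B_1=cW_1$, which follows by adding the expressions $Z_1=c((U_{00}U_{11}-DU_{22}U_{33})+aA_1V_{13})$ and $Z_3=c((U_{00}U_{33}-DU_{22}U_{11})+aA_1V_{13})$ and using $B_1=U_{11}+U_{33}+2aV_{13}$ to absorb the $2aA_1V_{13}$ term. We therefore compute $W_1=A_1B_1$ ($1\Mul$) but dispense with $W_3$, and evaluate $Z_1$ directly via the three extra products $U_{00}U_{11}$, $U_{22}U_{33}$, $A_1V_{13}$ ($3\Mul$); then $Z_3=cW_1-Z_1$ is free. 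The tally is $5\Mul + 2\Mul$ (for $Z_0$ and $Z_2$) $+ 1\Mul$ (for $W_1$) $+ 3\Mul$ (for the interior of $Z_1$) $= 11\Mul$, with the two scalings by $c$ accounting for the $2\mul$.

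The main obstacle is optimization rather than verification: a naive separate evaluation of $Z_1$ and $Z_3$ via the four cross products $U_{00}U_{11},U_{22}U_{33},U_{00}U_{33},U_{22}U_{11}$ together with $A_1V_{13}$ and the products for $Z_0,Z_2$ costs $12\Mul$. The identity $Z_1+Z_3=cW_1$ is precisely what lets one replace two of those cross products by the single multiplication $W_1=A_1B_1$, shaving off the last $\Mul$ and matching the claimed bound.
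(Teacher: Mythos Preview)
Your proposal is correct and follows essentially the same approach as the paper: both compute the five products $U_{ii}$ and $V_{13}$, then $Z_0$ and $Z_2$ as single products of the auxiliary forms, and both exploit the identity $Z_1+Z_3=cA_1B_1$ (equivalently, the paper's relation $(U_{00}-DU_{22})(U_{11}+U_{33})=(U_{00}U_{11}-DU_{22}U_{33})+(U_{00}U_{33}-DU_{22}U_{11})$ combined with the shared $aA_1V_{13}$ term) to save the one multiplication needed to reach $11\Mul+2\mul$, and in the invertible-$2$ case both compute $Z_1\pm Z_3$ as $cA_1B_1$ and $cA_0B_0$ to reach $9\Mul+2\mul$.
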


\begin{proof}
Considering the product determined by the pair $(\fs_2,\fs_3)$, the
evaluation of the expressions
$$
\begin{array}{r@{\,}c@{\,}l}
Z_0 & = & (U_{00} - D U_{22})(U_{00} + D U_{22}),\\
Z_2 & = & (U_{11} - U_{33})(U_{11} + U_{33} + 2aV_{13}),
\end{array}
$$
requires $4\Mul$ for the $U_{ii}$ plus $1\Mul$ for $V_{13}$ if $a \ne 0$,
then $2\Mul$ for the evaluation of $Z_0$ and $Z_2$.  Setting $W_{13} =
2(U_{11} + U_{33}) - V_{13}$, a direct evaluation of the expressions
$$
\begin{array}{l}
Z_1 = c((U_{00} U_{11} - D U_{22} U_{33}) - a(U_{00} - D U_{22})W_{13}),\\
Z_3 = c((U_{00} U_{33} - D U_{22} U_{11}) - a(U_{00} - D U_{22})W_{13}),
\end{array}
$$
requires an additional $4\Mul + 2\mul$, saving $1\Mul$ with the relation
$$
(U_{00} - D U_{22}) (U_{11} + U_{33}) =
(U_{00} U_{11} - D U_{22} U_{33}) + (U_{00} U_{33} - D U_{22} U_{11}),
$$
for a complexity of $11\Mul + 2\mul$. If $2$ is a unit, we may instead compute
$$
\begin{array}{r@{\,}c@{\,}l}
Z_1 + Z_3 & = & c\,(U_{00} - D U_{22})(U_{11} + U_{33} + 2aV_{13}),\\
Z_1 - Z_3 & = & c\,(U_{00} + D U_{22})(U_{11} - U_{33}).
\end{array}
$$
and return $(2Z_0,2Z_1,2Z_2,2Z_3)$ using $2\Mul + 2\mul$, for a total
cost of $9\Mul + 2\mul$.
\qed
\end{proof}

\begin{corollary}
Let $C^t$ be an elliptic curve in twisted split $\bbmu_4$-normal form.
There exists an algorithm for doubling with complexity
$6\Mul + 5\Sqr + 2\mul$ over any ring, and an algorithm with complexity
$4\Mul + 5\Sqr + 2\mul$ over a ring in which $2$ is a unit.
\end{corollary}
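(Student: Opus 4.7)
The plan is to specialize the addition algorithm from the preceding corollary (based on the pair of addition law projections $(\fs_2,\fs_3)$) to the diagonal $y = x$, in exact analogy with the proof of the doubling formula for the untwisted split $\bbmu_4$-normal form. Under this specialization, the five bilinear products $U_{ii} = X_i Y_i$ and $V_{13} = (X_1 - X_3)(Y_1 - Y_3)$ degenerate into $X_i^2$ and $(X_1 - X_3)^2$ respectively, so that $5\Mul$ of the addition algorithm are replaced by $5\Sqr$. The remaining products that appear --- those forming $Z_0$ and $Z_2$, as well as $U_{00}U_{11}$, $DU_{22}U_{33}$, $A_1(U_{11}+U_{33})$, and $A_1 W_{13}$ (with $A_1 = U_{00} - DU_{22}$ and $W_{13} = 2(U_{11}+U_{33}) - V_{13}$) --- involve pairs of genuinely distinct polynomials in the $X_i$ on the diagonal, so they remain honest multiplications.

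For the algorithm over an arbitrary ring, I would start from the $11\Mul + 2\mul$ addition algorithm and substitute $y = x$, obtaining $6\Mul + 5\Sqr + 2\mul$. The algebraic identity
$$
(U_{00} - DU_{22})(U_{11} + U_{33}) = (U_{00}U_{11} - DU_{22}U_{33}) + (U_{00}U_{33} - DU_{22}U_{11})
$$
that underlies the $1\Mul$ saving in addition remains valid on the diagonal, so the same trick carries through and contributes to the count of $6\Mul$ above. For the $2$-is-a-unit variant, I would apply the identical specialization to the factored expressions for $(Z_1 \pm Z_3)$ in the $9\Mul + 2\mul$ algorithm, where again exactly the same $5\Mul$ degenerate into $5\Sqr$, yielding $4\Mul + 5\Sqr + 2\mul$.

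To finish, I would verify that $(\fs_2,\fs_3)$ actually specializes to a valid doubling morphism: by Theorem~\ref{thm:twisted-mu4-addition-law-projections}, the exceptional divisors of these projections are supported on pairs $(T_j, T_j+Q)$ with $T_j \in \{R, S\}$, and neither $R$ nor $S$ equals the identity, so the diagonal is not contained in either exceptional locus and the pair yields a base-point-free doubling morphism. The argument is entirely routine and essentially a bookkeeping exercise; the only point requiring care is confirming that no additional $\Mul$ degenerates into a $\Sqr$ beyond the five already accounted for, which is immediate from the explicit form of the factors above, and that the constant-multiplication count remains $2\mul$ (corresponding to the products by $c$ and by $a$, or by $D$ and by $a$, which are unchanged by the specialization).
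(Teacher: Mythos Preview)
Your proposal is correct and follows exactly the paper's approach: specialize the $(\fs_2,\fs_3)$ addition algorithm to the diagonal, so that the five bilinear products $U_{ii}$ and $V_{13}$ degenerate to squares, converting $11\Mul + 2\mul$ (resp.\ $9\Mul + 2\mul$) into $6\Mul + 5\Sqr + 2\mul$ (resp.\ $4\Mul + 5\Sqr + 2\mul$). One small bookkeeping correction: the $2\mul$ are both multiplications by $c$ (one for $Z_1$ and one for $Z_3$, or one for each of $Z_1 \pm Z_3$), not by $a$ or $D$, which the paper's conventions treat as negligible.
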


\begin{proof}
The specialization to $X_i = Y_i$ gives:
$$
\begin{array}{r@{\,}c@{\,}l}
Z_0 & = & (X_0^2 - D X_2^2)(X_0^2 + D X_2^2),\\
Z_2 & = & (X_1^2 - X_3^2)(X_1^2 + X_3^2 + 2a(X_1 + X_3)^2).
\end{array}
$$
The evaluation of $X_i^2$ costs $4\Sqr$ plus $1\Sqr$ for $(X_1+X_3)^2$
if $a \ne 0$, rather than $4\Mul$ + $1\Mul$.
Setting $W_{13} = 2(X_1^2 + X_3^2) - (X_1 + X_3)^2\, [= (X_1 - X_3)^2]$,
a direct evaluation of the expressions
$$
\begin{array}{l}
Z_1 = c((X_0^2 X_1^2 - D X_2^2 X_3^2) - a(X_0^2 - D X_2^2)W_{13}),\\
Z_3 = c((X_0^2 X_3^2 - D X_2^2 X_1^2) - a(X_0^2 - D X_2^2)W_{13}),
\end{array}
$$
requires an additional $4\Mul + 2\mul$, as above, for a complexity
of $6\Mul + 5\Sqr + 2\mul$.  If $2$ is a unit, we compute
$$
\begin{array}{r@{\,}c@{\,}l}
Z_1 + Z_3 & = & c\,(X_0^2 - D X_2^2)(X_1^2 + X_3^2 + 2a(X_1 + X_3)^2),\\
Z_1 - Z_3 & = & c\,(X_0^2 + D X_2^2)(X_1^2 - X_3^2).
\end{array}
$$
using $2\Mul + 2\mul$, which gives $4\Mul + 5\Sqr + 2\mul$.
\qed
\end{proof}

In the next section we explore efficient algorithms for evaluation
of the addition laws and doubling forms in characteristic~$2$.

\section{Binary addition algorithms}
\label{sec:binary-addition-algorithms}
%%%%%%%%%%%%%%%%%%%%%%%%%%%%%%%%%%%%%%%%%%%%%%%%%%%%%%%%%%%%%%%%%%%%%%%%%%%%%%%%
%%%%%%%%%%%%%%%%%%%%%%%%%%%%%%%%%%%%%%%%%%%%%%%%%%%%%%%%%%%%%%%%%%%%%%%%%%%%%%%%

Suppose that $k$ is a finite field of characteristic~2. The Artin-Schreier
extension $k[\wx]/k$ over which we twist is determined by the additive
properties of $a$, and half of all elements of $k$ determine the same field
(up to isomorphism) and hence an isomorphic twist. For instance, if $k/\FF_2$
is an odd degree extension, we may take $a = 1$. As above, we assume that
that multiplication by $a$ is neglible in our complexity analyses.

\begin{theorem}
\label{thm:binary-twisted-mu4-addition-laws}
Let $C^t$ be an elliptic curve in twisted split $\bbmu_4$-normal form:
$$
X_0^2 + X_2^2 = c^2 (X_1 X_3 + a (X_1 + X_3)^2),\ X_1^2 + X_3^2 = c^2 X_0 X_2,
$$
over a field of characteristic $2$.  A complete system of addition
laws is given by the two maps $\fs_0$ and $\fs_2$,
$$
%// Addition laws exchanged by 2-torsion point S -- (P,Q) == (P+S,Q)+S
\begin{array}{c}
\big( %// 9M + 2S + 3m
    (U_{13} + U_{31})^2, %// 2M + 1S
    c (U_{02} U_{31} + U_{20} U_{13} + aF), %// 4M + 2m
    (U_{02} + U_{20})^2, %// 2M + 1S
    c (U_{02} U_{13} + U_{20} U_{31} + aF) %// 1M + 1m
  \big),\\[1mm]
\big( %// 9M + 2S + 3m
    (U_{00} + U_{22})^2, %// 2M + 1S
    c (U_{00} U_{11} + U_{22} U_{33} + aG), %// 4M + 2m
    (U_{11} + U_{33})^2, %// 2M + 1S
    c (U_{00} U_{33} + U_{11} U_{22} + aG)  %// 1M + 1m
  \big),
\end{array}
$$
respectively, where $U_{jk} = X_j Y_k$ and
$$
F = (X_1 + X_3) (Y_1 + Y_3) (U_{02} + U_{20}) \mbox{ and }
G = (X_1 + X_3) (Y_1 + Y_3) (U_{00} + U_{22}).
$$
The respective exceptional divisors are $4\Delta_O$ and $4\Delta_{S}$ where
$S = (1:c:1:0)$ is a $2$-torsion point.
\end{theorem}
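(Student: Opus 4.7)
The plan is to derive $\fs_0$ and $\fs_2$ from the corresponding addition laws on the untwisted split $\bbmu_4$-normal form of Theorem~\ref{thm:mu4-addition-laws}, by transport through the twisting isomorphism of Theorem~\ref{thm:twisted-split-mu4-curve}, and then to read off the exceptional divisors by the substitution technique already used in the proof of Theorem~\ref{thm:mu4-addition-laws}. In characteristic~$2$ one has $\delta = 1$, $D = 1$, $\wx+\wy = 1$, and $\wx\wy = a$, so that the twist reduces to $(X_0:X_1:X_2:X_3) \mapsto (X_0 : \wy X_1 + \wx X_3 : X_2 : \wx X_1 + \wy X_3)$ as recorded in the remark following Theorem~\ref{thm:twisted-split-mu4-curve}. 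For $j \in \{0,2\}$ I would then compute $\tau \circ \fs_j \circ (\tau^{-1} \times \tau^{-1})$ and expand; the cross terms involving $\wx$ and $\wy$ collapse, using the relations above, into expressions with coefficients in $k$. The structural observation that drives the calculation is that every $a$-dependent term is a scalar multiple of $(X_1+X_3)(Y_1+Y_3)$, which is exactly what the auxiliary quantities $F$ and $G$ package. Setting $a=0$ must recover the characteristic-$2$ reduction of the formulas of Theorem~\ref{thm:mu4-addition-laws}, which provides a consistency check.

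For the exceptional divisors I would apply Corollary~11 of Kohel~\cite{Kohel}: substitute $(Y_0,Y_1,Y_2,Y_3) = (c,1,0,1)$ into each $\fs_j$ and extract the scalar $\lambda$ such that the result equals $\lambda \cdot (X_0:X_1:X_2:X_3)$; the vanishing of $\lambda$ on $C^t$ then cuts out the support of the exceptional divisor intersected with the hyperplane $H = C^t \times \{O\}$. For $\fs_0$, one has $U_{02} = 0$, $U_{13} = X_1$, $U_{20} = cX_2$, $U_{31} = X_3$, and $Y_1 + Y_3 = 0$, so $F = 0$, and the tuple becomes $((X_1+X_3)^2,\, c^2 X_1 X_2,\, c^2 X_2^2,\, c^2 X_2 X_3)$. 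The curve equation $X_1^2+X_3^2 = c^2 X_0 X_2$ identifies this with $c^2 X_2 \cdot (X_0:X_1:X_2:X_3)$, so the exceptional divisor is cut out on $C^t$ by $X_2=0$. A brief analysis of the curve equations with $X_2=0$ shows this hyperplane section is supported only at $O$, and by B\'ezout its total degree is four, giving $4\Delta_O$. The analogous substitution for $\fs_2$ yields the scalar factor $c^2 X_0$, so $X_0=0$ cuts out a single $2$-torsion point $S$ with total multiplicity four, giving $4\Delta_S$.

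Completeness of the system $\{\fs_0,\fs_2\}$ follows at once from the disjointness of $4\Delta_O$ and $4\Delta_S$. The main obstacle is the first step: carefully managing the $a$-dependent cross terms produced by the twisting change of coordinates so that they collapse cleanly into the single auxiliary expressions $F$ and $G$. This is routine but bulky symbolic bookkeeping, best verified with the computer algebra machinery already invoked elsewhere in the paper.
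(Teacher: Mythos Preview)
Your proposal is correct and follows essentially the same approach as the paper: the paper also obtains $\fs_0$ and $\fs_2$ as the conjugates, under the twisting isomorphism of Theorem~\ref{thm:twisted-split-mu4-curve}, of the addition laws of Theorem~\ref{thm:mu4-addition-laws} (noting as an equivalent alternative the reduction modulo~$2$ of the products of the addition law projections of Theorem~\ref{thm:twisted-mu4-addition-law-projections}), and concludes completeness from disjointness of the exceptional divisors. The only minor difference is that for the exceptional divisors the paper invokes the observation that $O$ and $S$ are fixed by the twisting morphism, so the exceptional divisors $4\Delta_O$ and $4\Delta_S$ of the untwisted laws are preserved under conjugation, whereas you re-derive them by the direct substitution $(Y_0,Y_1,Y_2,Y_3)=(c,1,0,1)$ as in the proof of Theorem~\ref{thm:mu4-addition-laws}; both arguments are valid and short.
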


\begin{proof}
The addition laws $\fs_0$ and $\fs_2$ are the conjugate addition laws
of Theorem~\ref{thm:mu4-addition-laws} (as can be verified symbolically)\footnote{
As is verified by the implementation in Echidna~\cite{Kohel-Echidna}
written in Magma~\cite{Magma}.}
and, equivalently, are described by the reduction at $2$ of the addition
laws derived from the tuples of addition law projections $(\fs_0,\fs_1)$
and $(\fs_2,\fs_3)$ of Theorem~\ref{thm:twisted-mu4-addition-law-projections}.
Since the points $O$ and $S$ are fixed rational points of the twisting
morphism, the exceptional divisors are of the same form.
As the exceptional divisors are disjoint, the pair of addition laws form
a complete set.
\qed
\end{proof}

\noindent{\bf Remark.}
Recall that the addition laws $\fs_1$ and $\fs_3$ on the split
$\bbmu_4$-normal form have exceptional divisors $4\Delta_T$
and $4\Delta_{-T}$ in characteristic $2$ (since $S = O$).
Consequently their conjugation by the twisting
morphism yields a conjugate pair over $k[\wx]$, since the twisted
curve does not admit a $k$-rational $4$-torsion point $T$.
There exist linear combinations of these twisted addition laws
which extend the set $\{\fs_0,\fs_2\}$ to a basis over $k$
(of the space of dimension four), but they do not have such
an elegant form as $\fs_0$ and $\fs_2$.

\begin{corollary}
\label{cor:binary-twisted-split-mu4-complexity}
Let $C^t$ be an elliptic curve in twisted split $\bbmu_4$-normal form over
a field of characteristic 2. There exists an algorithm for addition with
complexity $9\Mul + 2\Sqr + 2\mul$.
\end{corollary}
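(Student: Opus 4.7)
The plan is to evaluate the first addition law of Theorem~\ref{thm:binary-twisted-mu4-addition-laws},
$$
\fs_2 = \big((U_{00}+U_{22})^2,\ c(U_{00}U_{11} + U_{22}U_{33} + aG),\ (U_{11}+U_{33})^2,\ c(U_{00}U_{33} + U_{11}U_{22} + aG)\big),
$$
so that only nine ring multiplications, two squarings and two multiplications by the curve constant $c$ appear; by the convention fixed at the start of the section, the multiplication by $a$ inside $aG$ is treated as negligible.

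First I would compute the diagonal products $U_{00}, U_{11}, U_{22}, U_{33}$ at a cost of $4\Mul$, from which $Z_0 = (U_{00}+U_{22})^2$ and $Z_2 = (U_{11}+U_{33})^2$ follow immediately at $2\Sqr$. Next I would compute $A := U_{00}U_{11}$ and $B := U_{22}U_{33}$ ($2\Mul$), the mixed product $P := (U_{00}+U_{22})(U_{11}+U_{33})$ ($1\Mul$), and the two factors $V := (X_1+X_3)(Y_1+Y_3)$ and $G := V\,(U_{00}+U_{22})$ entering the twist correction ($2\Mul$), bringing the running total to $9\Mul + 2\Sqr$. The coordinate $Z_1$ is then read off as $c(A + B + aG)$, consuming one multiplication by $c$.

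The decisive step is to obtain $Z_3$ without any further bilinear products. The Karatsuba-type identity
$$
(U_{00}+U_{22})(U_{11}+U_{33}) = (U_{00}U_{11} + U_{22}U_{33}) + (U_{00}U_{33} + U_{11}U_{22})
$$
gives $U_{00}U_{33} + U_{11}U_{22} = P + A + B$ in characteristic $2$, so that
$$
Z_3 = c\big((U_{00}U_{33} + U_{11}U_{22}) + aG\big) = c(P + A + B + aG) = Z_1 + cP,
$$
which absorbs the second and final multiplication by $c$. The hard part, and the place where characteristic $2$ is genuinely used, is to recognise this shortcut so that only the single product $P$ suffices to recover the pair $(Z_1, Z_3)$ from the data already in hand; once this is in place, the bookkeeping delivers exactly the announced complexity $9\Mul + 2\Sqr + 2\mul$.
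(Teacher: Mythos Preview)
Your proof is correct and follows essentially the same approach as the paper. The paper's argument is terser---it simply observes that the twisted addition law differs from the untwisted one only by the term $aG$, whose evaluation costs an extra $2\Mul$ on top of the $7\Mul + 2\Sqr + 2\mul$ already established for the split $\bbmu_4$-normal form in characteristic~2---while you unpack the full algorithm explicitly, but the underlying computation (the Karatsuba identity to recover $Z_3$ from $Z_1$ and the single cross-product $P$, plus two extra multiplications for $G$) is the same.
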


\begin{proof}
Since the addition laws differ from the split $\bbmu_4$-normal form only by
the term $aF$ (or $aG$), it suffices to determine the complexity of its
evaluation.  Having determined $(U_{02},U_{20})$ (or $(U_{00},U_{22})$), we
require an additional $2\Mul$, which gives the complexity bound.
\qed
\end{proof}

For the $\bbmu_4$-normal form the addition law, after coefficient scaling, we
find that the addition law with exceptional divisor $4\Delta_O$ takes the form
$$
(
  (U_{13} + U_{31})^2, U_{02} U_{31} + U_{20} U_{13} + aF,
  (U_{20} + U_{02})^2, U_{02} U_{13} + U_{20} U_{31} + aG
),
$$
and in particular does not involve multiplication by constants (other than $a$
which we may take in $\{0,1\}$ in cryptographic applications). This gives the
following complexity result.

\begin{corollary}
\label{cor:binary-twisted-mu4-complexity}
Let $C^t$ be an elliptic curve in twisted $\bbmu_4$-normal form over a field
of characteristic 2. There exists an algorithm for addition outside of the
diagonal $\Delta_O$ with complexity $9\Mul + 2\Sqr$.
\end{corollary}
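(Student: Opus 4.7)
My plan is to mirror the complexity analysis that yielded the $7\Mul + 2\Sqr$ bound for the untwisted split $\bbmu_4$-normal form, keeping track of the extra cost contributed by the twisting term $aF$. The key arithmetic observation, valid because we are in characteristic~2, is that the addition law $\fs_0$ of Theorem~\ref{thm:binary-twisted-mu4-addition-laws}, after the coefficient scaling displayed just before the statement, has the identity
$$
Z_1 + Z_3 = (U_{02} U_{31} + U_{20} U_{13} + aF) + (U_{02} U_{13} + U_{20} U_{31} + aF) = (U_{02} + U_{20})(U_{13} + U_{31}),
$$
since the two copies of $aF$ cancel in characteristic~2. Thus the factorization trick used for the untwisted case survives unchanged, and only the evaluation of $F$ itself contributes additional multiplications.

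I would then organize the algorithm in the following order. First, compute $U_{02}, U_{20}, U_{13}, U_{31}$ at a cost of $4\Mul$. Second, exploit the Frobenius identities $Z_0 = (U_{13}+U_{31})^2$ and $Z_2 = (U_{02}+U_{20})^2$ to obtain these two coordinates in $2\Sqr$. Third, evaluate $P_{13} = (X_1+X_3)(Y_1+Y_3)$ with $1\Mul$, and then $aF = a\,P_{13}\,(U_{02}+U_{20})$ with one further $\Mul$ (treating the multiplication by the curve parameter $a \in \{0,1\}$ as free in cryptographic applications, as in the preceding results). Fourth, compute $W = (U_{02}+U_{20})(U_{13}+U_{31})$ with $1\Mul$, which by the identity above equals $Z_1 + Z_3$.

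Finally, compute $U_{02}U_{31}$ and $U_{20}U_{13}$ with $2\Mul$, recover $Z_1 = U_{02}U_{31} + U_{20}U_{13} + aF$ by additions, and then $Z_3 = Z_1 + W$ at no cost. Summing the multiplications: $4 + 1 + 1 + 1 + 2 = 9\Mul$, together with $2\Sqr$ for the first two coordinates and only field additions otherwise, yields the claimed $9\Mul + 2\Sqr$ bound. The restriction to points outside the diagonal $\Delta_O$ is forced by the fact that $\fs_0$ has exceptional divisor $4\Delta_O$, so the addition law is simply inapplicable at such pairs.

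I expect no serious obstacle: the algorithmic structure is dictated by the untwisted $\bbmu_4$-normal form analysis, and the only thing to verify carefully is that the characteristic-2 cancellation of the $aF$ terms in $Z_1 + Z_3$ preserves the key identity that avoids computing all four quadratic cross-products $U_{02}U_{13}, U_{02}U_{31}, U_{20}U_{13}, U_{20}U_{31}$ independently. Relative to the split case of Corollary~\ref{cor:binary-twisted-split-mu4-complexity}, the savings of $2\mul$ is immediate because the non-split $\bbmu_4$-normal form, as noted in the remark preceding the corollary, has no $c$-factor in any output coordinate.
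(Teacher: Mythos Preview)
Your proposal is correct and follows essentially the same approach as the paper: the paper simply displays the scaled addition law $\fs_0$ for the (non-split) twisted $\bbmu_4$-normal form, observes that it carries no $c$-factors, and concludes the bound from Corollary~\ref{cor:binary-twisted-split-mu4-complexity} by dropping the $2\mul$. Your write-up makes the algorithm explicit, including the key characteristic-2 cancellation $Z_1+Z_3=(U_{02}+U_{20})(U_{13}+U_{31})$, which is precisely the mechanism underlying the $7\Mul+2\Sqr$ count in the untwisted case that the paper's Corollary~\ref{cor:binary-twisted-split-mu4-complexity} builds on.
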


%%%%%%%%%%%%%%%%%%%%%%%%%%%%%%%%%%%%%%%%%%%%%%%%%%%%%%%%%%%%%%%%%%%%%%%%%%%%%%%%
%%%%%%%%%%%%%%%%%%%%%%%%%%%%%%%%%%%%%%%%%%%%%%%%%%%%%%%%%%%%%%%%%%%%%%%%%%%%%%%%
\section{Binary doubling algorithms}
\label{sec:binary-doubling}
%%%%%%%%%%%%%%%%%%%%%%%%%%%%%%%%%%%%%%%%%%%%%%%%%%%%%%%%%%%%%%%%%%%%%%%%%%%%%%%%
%%%%%%%%%%%%%%%%%%%%%%%%%%%%%%%%%%%%%%%%%%%%%%%%%%%%%%%%%%%%%%%%%%%%%%%%%%%%%%%%

We recall the hypothesis that multiplication by $a$ is negligible.
In the cryptographic context (e.g.~in application to the binary NIST
curves), we may assume $a = 1$ (or $a = 0$ for the untwisted forms).

\begin{corollary}
Let $C^t$ be an elliptic curve in twisted split $\bbmu_4$-normal form.
The doubling map is uniquely determined by
$$
\begin{array}{r@{\,}l}
(&(X_0 + X_2)^4 :
 c((X_0 X_3 + X_1 X_2)^2 + a(X_0 + X_2)^2(X_1 + X_3)^2) : \\
 &(X_1 + X_3)^4 :
 c((X_0 X_1 + X_2 X_3)^2 + a(X_0 + X_2)^2(X_1 + X_3)^2)\,)
\end{array}
$$
\end{corollary}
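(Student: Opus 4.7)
The plan is to derive the doubling formula by specializing a complete addition law from Theorem~\ref{thm:binary-twisted-mu4-addition-laws} to the diagonal $Y_i = X_i$. Of the two addition laws $\fs_0$ and $\fs_2$ given there, only $\fs_2$ is available for this purpose: every coordinate function of $\fs_0$ is built from the symmetric combinations $U_{jk} + U_{kj} = X_j Y_k + X_k Y_j$, which collapse to $2 X_j X_k = 0$ in characteristic~$2$ when $Y_i = X_i$. Hence $\fs_0$ vanishes identically on the diagonal, reflecting the fact that its exceptional divisor $4\Delta_O$ contains the diagonal as a subscheme in characteristic~$2$. The law $\fs_2$, whose exceptional divisor $4\Delta_S$ is supported at a $2$-torsion translate disjoint from the diagonal, specializes to a non-trivial polynomial representation of $[2]$.

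Carrying out the substitution $Y_i \mapsto X_i$ in the coordinate functions of $\fs_2$ and applying the Frobenius identity $(\alpha + \beta)^2 = \alpha^2 + \beta^2$ valid in characteristic~$2$, one obtains
\begin{align*}
(U_{00} + U_{22})^2 &\longmapsto (X_0^2 + X_2^2)^2 = (X_0 + X_2)^4, \\
(U_{11} + U_{33})^2 &\longmapsto (X_1 + X_3)^4, \\
U_{00} U_{11} + U_{22} U_{33} &\longmapsto X_0^2 X_1^2 + X_2^2 X_3^2 = (X_0 X_1 + X_2 X_3)^2, \\
U_{00} U_{33} + U_{11} U_{22} &\longmapsto (X_0 X_3 + X_1 X_2)^2, \\
G &\longmapsto (X_1 + X_3)^2 (X_0 + X_2)^2.
\end{align*}
Reassembling these into the four components of $\fs_2$ yields the displayed quartic tuple.

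Finally, the uniqueness asserted in the statement amounts to the observation that any polynomial representation of a morphism to $\PP^3$ of fixed multidegree is determined up to a nonzero scalar by its image; since the leading coordinate $(X_0 + X_2)^4$ does not vanish identically on $C^t$, the tuple is nondegenerate and therefore equals $[2]$ up to rescaling. The only conceptual obstacle is the initial choice between $\fs_0$ and $\fs_2$: once one recognises that the former degenerates on the diagonal in characteristic~$2$ while the latter does not, the remainder of the argument reduces to mechanical applications of Frobenius.
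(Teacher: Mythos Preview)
Your proof is correct and coincides with the paper's: both specialize the addition law $\fs_2$ of Theorem~\ref{thm:binary-twisted-mu4-addition-laws} to the diagonal $Y_i=X_i$, and your added explanation of why $\fs_0$ degenerates on the diagonal together with the explicit Frobenius simplifications is accurate elaboration of the paper's one-line argument. One small caveat worth recording: if you reassemble your substitutions into the components of $\fs_2$ in their given order you obtain the second and fourth coordinates as $c\big((X_0X_1+X_2X_3)^2+a(X_0+X_2)^2(X_1+X_3)^2\big)$ and $c\big((X_0X_3+X_1X_2)^2+a(X_0+X_2)^2(X_1+X_3)^2\big)$ respectively, i.e.\ swapped relative to the displayed tuple --- comparing with the $a=0$ specialization of the earlier untwisted doubling formula shows this is a slip in the corollary's statement rather than in your computation.
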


\begin{proof}
This follows from specializing $X_j = Y_j$ in the form $\fs_2$ of
Theorem~\ref{thm:binary-twisted-mu4-addition-laws}.
\qed
\end{proof}

We note that in cryptographic applications we may assume that $a = 0$ (untwisted form),
giving
$$
((X_0 + X_2)^4 : c(X_0 X_3 + X_2 X_1)^2 : (X_1 + X_3)^4 : c(X_0 X_1 + X_2 X_3)^2),
$$
and otherwise $a = 1$, in which case we have
$$
((X_0 + X_2)^4 : c(X_0 X_1 + X_2 X_3)^2 : (X_1 + X_3)^4 : c(X_0 X_3 + X_2 X_1)^2).
$$
It is clear that the evaluation of doubling on the twisted and untwisted
normal forms is identical.  This is true also for the case of general $a$,
up to the computation of $(X_0 + X_2)^2(X_1 + X_3)^2$.
We nevertheless give an algorithm which improves upon the number of constant
multiplications reported in Kohel~\cite{Kohel-Indocrypt}, in terms of
polynomials in $u = c^{-1}$. With this notation, we note that the defining
equations of the curve are:
$$
\begin{array}{l}
X_1 X_3 = u^2(X_0 + X_2)^2,\\
X_0 X_2 = u^2(X_1 + X_3)^2.
\end{array}
$$
These relations are important, since they permit us to replace any instances
of the multiplications on the left with the squarings on the right.
As a consequence, we have
$$
\begin{array}{r@{\;}l}
X_0 X_1 + X_2 X_3 & = (X_0 + X_3)(X_2 + X_1) + X_0 X_2 + X_1 X_3\\
                  & = (X_0 + X_3)(X_2 + X_1) + u^2((X_0 + X_2)^2 + (X_1 + X_3)^2)\\
X_0 X_3 + X_2 X_1 & = (X_0 + X_1)(X_2 + X_3) + X_0 X_2 + X_1 X_3\\
                  & = (X_0 + X_1)(X_2 + X_3) + u^2((X_0 + X_2)^2 + (X_1 + X_3)^2).
\end{array}
$$
Moreover these forms are linearly dependent with $(X_0 + X_2)(X_1 + X_3)$
$$
(X_0 X_1 + X_2 X_3) + (X_0 X_3 + X_2 X_1) = (X_0 + X_2)(X_1 + X_3),
$$
so that two multiplications are sufficient for the determination of these
three forms.  Putting this together, it suffices to evaluate the tuple
$$
(u(X_0 + X_2)^4, (X_0 X_1 + X_2 X_3)^2, u(X_1 + X_3)^4, (X_0 X_3 + X_2 X_1)^2),
$$
for which we obtain the following complexity for doubling.

\begin{corollary}
Let $C^t$ be a curve in twisted split $\bbmu_4$-normal form.
There exists an algorithm for doubling with complexity
$2\Mul + 5\Sqr + 3\mul_u$.
\end{corollary}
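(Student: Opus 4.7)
The plan is to give an explicit straight-line evaluation of the rescaled doubling tuple
$$
(u(X_0+X_2)^4 :\ (X_0X_1+X_2X_3)^2 :\ u(X_1+X_3)^4 :\ (X_0X_3+X_1X_2)^2)
$$
isolated in the discussion preceding the corollary, matching the stated count. The outer coordinates are pure fourth powers up to the constant $u$, and I will handle them by first forming $A := (X_0+X_2)^2$ and $B := (X_1+X_3)^2$ (two squarings), then $A^2$ and $B^2$ (two more squarings), and finally $uA^2$ and $uB^2$ (two constant multiplications). This consumes four of the five permitted squarings and two of the three permitted constant multiplications.

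For the middle coordinates, set $v_1 := X_0X_1 + X_2X_3$ and $v_2 := X_0X_3 + X_1X_2$. The identity $v_1 = (X_0+X_3)(X_1+X_2) + u^2(A+B)$ derived in the preceding discussion from the curve relations $X_0X_2 = u^2 B$ and $X_1X_3 = u^2 A$ allows me to recover $v_1$ via a single general multiplication $p_2 := (X_0+X_3)(X_1+X_2)$ together with the third permitted constant multiplication, namely the scaling of $A+B$ by $u^2$. One further squaring then produces $v_1^2$, exhausting the squaring budget.

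The key insight, and the only place where a small trick is required, is the recovery of $v_2^2$ without a sixth squaring. From $v_1 + v_2 = (X_0+X_2)(X_1+X_3) =: p_1$ I obtain, after Frobenius, the characteristic-two identity $v_2^2 = v_1^2 + p_1^2$; and because $A$ and $B$ are themselves squares, $p_1^2 = (X_0+X_2)^2(X_1+X_3)^2 = AB$. Hence $p_1^2$ is produced by the single additional multiplication $C := AB$ of data already in hand, without ever forming $p_1$ itself, and $v_2^2 = v_1^2 + C$ costs nothing beyond an addition. The middle coordinates are assembled from $v_1^2$ and $C$, with the twist correction $a \cdot AB = a C$ absorbed as a negligible multiplication by $a \in \{0,1\}$ per the convention at the head of the section. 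The main obstacle, already resolved by the identification $p_1^2 = AB$, is precisely this squaring economy: a direct evaluation would spend a sixth squaring on either $v_2^2$ or $p_1^2$. Totalling the five squarings ($A, B, A^2, B^2, v_1^2$), the two multiplications ($p_2$ and $C$), and the three constant multiplications ($uA^2$, $uB^2$, $u^2(A+B)$) gives the claimed complexity $2\Mul + 5\Sqr + 3\mul_u$.
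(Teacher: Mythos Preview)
Your proof is correct and follows essentially the route sketched in the paper's discussion preceding the corollary. The paper does not spell out the step-by-step algorithm; it only observes that two multiplications suffice to determine the three forms $X_0X_1+X_2X_3$, $X_0X_3+X_1X_2$, and $(X_0+X_2)(X_1+X_3)$, and then asserts the complexity. You have made explicit the one point the paper leaves implicit: rather than forming $p_1 = (X_0+X_2)(X_1+X_3)$ and squaring it, you compute $p_1^2 = AB$ directly as a product of the already-available squares $A = (X_0+X_2)^2$ and $B = (X_1+X_3)^2$, and recover $v_2^2 = v_1^2 + AB$ via Frobenius. This is precisely the economy needed to stay within five squarings, and it simultaneously supplies the term $(X_0+X_2)^2(X_1+X_3)^2$ required for the twist correction by $a$ at no extra cost. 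Your operation count---$5\Sqr$ for $A,B,A^2,B^2,v_1^2$; $2\Mul$ for $p_2$ and $AB$; $3\mul_u$ for $uA^2,uB^2,u^2(A+B)$---matches the claim exactly.
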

Using the semisplit $\bbmu_4$-normal form, the complexity of $2\Mul + 5\Sqr + 2\mul_u$
of Kohel~\cite{Kohel-Indocrypt}, saving one constant multiplication, carries over to
the corresponding twisted semisplit $\bbmu_4$-normal form (referred to as nonsplit).
By a similar argument the same complexity, $2\Mul + 5\Sqr + 2\mul_u$, is obtained for
the $\bbmu_4$-normal form of this article.

%%%%%%%%%%%%%%%%%%%%%%%%%%%%%%%%%%%%%%%%%%%%%%%%%%%%%%%%%%%%%%%%%%%%%%%%%%%%%%%%
%%%%%%%%%%%%%%%%%%%%%%%%%%%%%%%%%%%%%%%%%%%%%%%%%%%%%%%%%%%%%%%%%%%%%%%%%%%%%%%%
\section{Montgomery endomorphisms of Kummer products}
\label{sec:montgomery-endomorphism}
%%%%%%%%%%%%%%%%%%%%%%%%%%%%%%%%%%%%%%%%%%%%%%%%%%%%%%%%%%%%%%%%%%%%%%%%%%%%%%%%
%%%%%%%%%%%%%%%%%%%%%%%%%%%%%%%%%%%%%%%%%%%%%%%%%%%%%%%%%%%%%%%%%%%%%%%%%%%%%%%%

We recall certain results of Kohel~\cite{Kohel-Indocrypt} concerning
the Montgomery endomorphism with application to scalar multiplication
on products of Kummer curves. We define the Montgomery endomorphism
to be the map $\varphi:C \times C \rightarrow C \times C$
given by $(Q,R) \mapsto (2Q,Q+R)$.  With a view to scalar multiplication,
this induces
$$
((n+1)P,nP) \longmapsto ((2n+2)P,(2n+1)P),
$$
and
$$
(nP,(n+1)P) \longmapsto (2nP,(2n+1)P).
$$
By exchanging the order of the coordinates on input and output, an algorithm for
the Montgomery endomorphism computes $((2n+2)P,(2n+1)P)$ or $((2n+1)P,2nP)$ from
the input point $((n+1)P,nP)$. This allows us to construct a symmetric algorithm
for the scalar multiple $kP$ of $P$ via a Montgomery ladder
$$
((n_{i}+1)P,n_{i}P) \longmapsto ((n_{i+1}+1)P,n_{i+1}P) =
\left\{\begin{array}{@{}l}
((2n_{i}+1)P,2n_{i}P),\mbox{ or}\\
((2n_{i}+2)P,(2n_{i}+1)P).
\end{array}
\right.
$$
It is noted that the Montgomery endomorphism sends each of the curves
$$
\Delta_P = \{ (Q,Q-P) \;|\; Q \in C(\bar{k}) \},
\mbox{ and }
\Delta_{-P} = \{ (Q,Q-P) \;|\; Q \in C(\bar{k}) \},
$$
to itself, and exchange of coordinates induces $\Delta_P \rightarrow \Delta_{-P}$.

We now assume that $C$ is a curve in split $\bbmu_4$-normal form,
and define the Kummer curve $\cK(C) = C/\{\pm 1\} \isom \PP^1$,
equipped with map
$$
\pi((X_0:X_1:X_2:X_3) = \left\{
\begin{array}{@{}c}
(cX_0 : X_1 + X_3),\\
(X_1 - X_3 : cX_2).
\end{array}
\right.
$$
This determines a curve $\cK(\Delta_P)$ as the image of $\Delta_P$
in $\cK(C) \times \cK(C)$.

\begin{lemma}
For any point $P$ of $C$, the Montgomery-oriented curve $\cK(\Delta_P)$ equals $\cK(\Delta_{-P})$.
\end{lemma}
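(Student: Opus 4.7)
The plan is to show that the diagonal involution $(-1,-1)$ on $C \times C$ carries $\Delta_P$ bijectively onto $\Delta_{-P}$, and then invoke the fact that this involution acts trivially on the Kummer product $\cK(C) \times \cK(C)$.

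First I would unpack the definitions, noting that (what appears as a typo above) $\Delta_{-P}$ is $\{(Q, Q+P) \mid Q \in C(\bar{k})\}$. Next I would observe that the involution $\iota: (Q,R) \mapsto (-Q,-R)$ on $C \times C$ sends an arbitrary point $(Q, Q-P) \in \Delta_P$ to
$$
(-Q, -Q+P) = \bigl(-Q,\, (-Q) - (-P)\bigr),
$$
which is manifestly a point of $\Delta_{-P}$. The same computation in the reverse direction shows $\iota$ restricts to a bijection $\Delta_P \xrightarrow{\sim} \Delta_{-P}$.

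To finish, I would use the fact that the Kummer projection $\pi \times \pi: C \times C \to \cK(C) \times \cK(C)$ is the quotient by the action of $\{\pm 1\} \times \{\pm 1\}$, and in particular is invariant under the diagonal involution $\iota$. Hence
$$
\cK(\Delta_P) = (\pi \times \pi)(\Delta_P) = (\pi \times \pi)(\iota(\Delta_P)) = (\pi \times \pi)(\Delta_{-P}) = \cK(\Delta_{-P}),
$$
as claimed. There is no real obstacle: the argument is a direct set-theoretic verification, and the only care needed is to recognize that although $\Delta_P$ and $\Delta_{-P}$ are generically distinct divisors on $C \times C$ (neither is fixed by any non-trivial element of $\{\pm 1\}^2$ unless $P$ is $2$-torsion), they lie in the same $\{\pm 1\}^2$-orbit and therefore have the same image in the Kummer product.
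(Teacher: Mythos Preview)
Your proof is correct and is essentially the same argument as the paper's: both observe that the diagonal involution $(Q,Q-P)\mapsto(-Q,-Q+P)=(-Q,(-Q)-(-P))$ carries $\Delta_P$ to $\Delta_{-P}$ and that this involution becomes the identity after passing to the Kummer quotient. The paper simply writes this pointwise as $(\overline{Q},\overline{Q-P})=(\overline{-Q},\overline{-Q-(-P)})\in\cK(\Delta_{-P})$ and concludes by symmetry, whereas you phrase it in terms of the involution $\iota$ and the invariance of $\pi\times\pi$; the content is identical.
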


\begin{proof}
It suffices to note that $(\overline{Q},\overline{Q-P}) \in \cK(\Delta_{P})(\bar{k})$
is also a point of $\cK(\Delta_{-P})$:
$$
(\overline{Q},\overline{Q-P})
  = (\overline{-Q},\overline{-Q+P})
  = (\overline{-Q},\overline{-Q-(-P)}) \in \cK(\Delta_{-P}),
$$
hence $\cK(\Delta_P) \subseteq \cK(\Delta_{-P})$ and by symmetry $\cK(\Delta_{-P})
\subseteq \cK(\Delta_{P})$.
\end{proof}

We conclude, moreover, that $\cK(\Delta_P)$ is well-defined by a point on
the Kummer curve.
\begin{lemma}
The Montgomery-oriented curve $\cK(\Delta_P)$ depends only on $\pi(P)$.
\end{lemma}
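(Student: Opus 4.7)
The plan is to reduce the statement directly to the preceding lemma. The Kummer map $\pi : C \to \cK(C) \isom \PP^1$ is the quotient by the involution $[-1]$, so its fibers are precisely the pairs $\{P,-P\}$. Consequently, if $P$ and $P'$ are two points of $C$ with $\pi(P) = \pi(P')$, then $P' = \pm P$.

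I would then observe that in either case $\cK(\Delta_P) = \cK(\Delta_{P'})$: if $P' = P$ there is nothing to prove, and if $P' = -P$ the equality $\cK(\Delta_P) = \cK(\Delta_{-P})$ is exactly the content of the preceding lemma. Hence $\cK(\Delta_P)$ is a function of $\pi(P)$ alone.

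No step here is genuinely delicate; the only thing to be careful about is that the identification of the fibers of $\pi$ with $\{\pm P\}$ uses that $\pi$ is the quotient by $\{\pm 1\}$ (as recorded in the definition $\cK(C) = C/\{\pm 1\}$), which allows us to invoke the previous lemma without further argument.
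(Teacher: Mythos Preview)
Your argument is correct and is precisely the abstract reduction the paper itself acknowledges in its opening sentence (``The dependence only on $\pi(P)$ is a consequence of the previous lemmas''). The difference is that the paper then goes further and makes the dependence explicit: writing $P=(s_0:s_1:s_2:s_3)$ and $\pi(P)=(t_0:t_1)$, it recalls the defining equation of $\cK(\Delta_P)$ from Theorem~24 of \cite{Kohel-Indocrypt},
\[
s_0(U_0V_1+U_1V_0)^2 + s_2(U_0V_0+U_1V_1)^2 = c(s_1+s_3)U_0U_1V_0V_1,
\]
and then uses $(s_0:s_1+s_3:s_2)=(t_0^2:c\,t_0t_1:t_1^2)$ to rewrite it purely in terms of $(t_0:t_1)$. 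Your fiber argument establishes the lemma with less effort; the paper's computation buys an explicit model of $\cK(\Delta_P)$ in the Kummer coordinate, which is what is actually used downstream for the Montgomery endomorphism and point recovery.
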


\begin{proof}
The dependence only on $\pi(P)$ is a consequence of the previous lemmas,
which we make explicit here.  Let $P = (s_0:s_1:s_2:s_3)$ and
$\pi(P) = (t_0:t_1)$.  By Theorem~24 of Kohel~\cite{Kohel-Indocrypt},
the curve $\cK(\Delta_P)$ takes the form,
$$
s_0(U_0V_1 + U_1V_0)^2 + s_2(U_0V_0 + U_1V_1)^2 = c(s_1 + s_3)U_0U_1V_0V_1,
$$
but then $(s_0:s_1+s_3:s_2) = (t_0^2:c\,t_0t_1,t_1^2)$ in $\PP^2$, hence
$$
t_0^2(U_0V_1 + U_1V_0)^2 + t_1^2(U_0V_0 + U_1V_1)^2 = c^2t_0t_1U_0U_1V_0V_1.
$$
which shows that the curve depends only on $\pi(P)$.
\end{proof}

We note similarly that the Kummer curve $\cK(C) = \cK(C^t)$ is independent
of the quadratic twist, in the sense that any twisting isomorphism
$\tau : C \rightarrow C^t$ over $\bar{k}$ induces a unique isomorphism
$\cK(C) \rightarrow \cK(C^t)$.  One can verify directly the twisting
isomorphism $\tau$ of Theorem~\ref{thm:twisted-split-mu4-curve} induces
the identity on the Kummer curves with their given projections.
We thus identify $\cK(C) = \cK(C^t)$, and denote $\pi: C \rightarrow \cK(C)$
and $\pi^t:C^t \rightarrow \cK(C)$ the respective covers of the Kummer curve.

\begin{theorem}
Let $C$ be a curve in split $\bbmu_4$-normal form and $C^t$ be a quadratic twist
over the field~$k$.  If $P \in C^t(\bar{k})$ and $Q \in C(\bar{k})$ such that
$\pi^t(P) = \pi(Q)$, then $\cK(\Delta_P) = \cK(\Delta_{Q})$.
\end{theorem}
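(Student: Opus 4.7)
The plan is to reduce the claim to the previous lemma by transporting $Q$ to the twisted curve via the twisting isomorphism. The key inputs are (i) the preceding lemma, which shows $\cK(\Delta_P)$ depends only on $\pi(P)$ when $P$ is a point of the untwisted curve, and (ii) the remark immediately preceding the theorem, that the twisting isomorphism $\tau: C \to C^t$ of Theorem~\ref{thm:twisted-split-mu4-curve}, although only defined over a quadratic extension of $k$, descends to the identity on the Kummer curves under the identification $\cK(C) = \cK(C^t)$, so that $\pi^t \circ \tau = \pi$.

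First I would observe that $\tau$ induces a product isomorphism $\tau \times \tau: C \times C \to C^t \times C^t$, defined over $\bar{k}$, which carries the diagonal $\Delta_Q$ to $\Delta_{\tau(Q)}$ for any $Q \in C(\bar{k})$. Passing to the Kummer quotient and using the identification $\cK(C) = \cK(C^t)$ yields the equality
$$
\cK(\Delta_Q) = \cK(\Delta_{\tau(Q)}),
$$
as subvarieties of $\cK(C) \times \cK(C)$.

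Next I would apply the compatibility $\pi^t \circ \tau = \pi$ to the hypothesis: we have $\pi^t(\tau(Q)) = \pi(Q) = \pi^t(P)$. The previous lemma, whose derivation only uses the form of the curve and of the projection to the Kummer quotient and therefore applies verbatim on $C^t$, then gives
$$
\cK(\Delta_P) = \cK(\Delta_{\tau(Q)}).
$$
Combining the two displayed equalities concludes the proof.

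The main obstacle, and the only point requiring genuine verification, is the compatibility of $\tau$ with the Kummer projections, i.e.\ that $\pi^t \circ \tau = \pi$. This is asserted in the discussion preceding the theorem and follows from direct substitution of the explicit form of $\tau$ from Theorem~\ref{thm:twisted-split-mu4-curve} into the two pairs of rational functions defining $\pi$ and $\pi^t$; once this identification is in hand, the rest of the argument is purely formal and requires only that Kummer quotients are compatible with product morphisms and with translation.
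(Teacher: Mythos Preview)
Your argument is correct and is precisely the one the paper sets up: the paper does not include a formal proof of this theorem, presenting it instead as an immediate consequence of the preceding lemma (that $\cK(\Delta_P)$ depends only on $\pi(P)$) together with the observation just before the statement that the twisting isomorphism $\tau$ of Theorem~\ref{thm:twisted-split-mu4-curve} induces the identity on the identified Kummer curves, i.e.\ $\pi^t\circ\tau=\pi$. Your transport of $\Delta_Q$ to $\Delta_{\tau(Q)}$ via $\tau\times\tau$ and the subsequent comparison on the Kummer side is exactly this implicit argument made explicit; one small simplification is that once $\pi^t(\tau(Q))=\pi^t(P)$ you may conclude $\tau(Q)\in\{P,-P\}$ directly from the definition of the Kummer quotient and then invoke the first lemma $\cK(\Delta_P)=\cK(\Delta_{-P})$, avoiding the need to reprove the second lemma on $C^t$.
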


It follows that we can evaluate the Montgomery endomorphism on $\cK(\Delta_P)$,
for $P \in C^t(k)$, and $\pi(P) = (t_0:t_1)$, using the same algorithm and with
the same complexity as in Kohel~\cite{Kohel-Indocrypt}.
We recall the complexity result here, assuming a normalisation $t_0 = 1$ or
$t_1 = 1$.

\begin{corollary}
The Montgomery endomorphism on $\cK(\Delta_P)$ can be computed with
$4\Mul + 5\Sqr + 1\mul_t + 1\mul_c$ or with $4\Mul + 4\Sqr + 1\mul_t + 2\mul_c$.
\end{corollary}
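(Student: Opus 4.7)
The plan is to derive this as a direct consequence of the preceding theorem, which identifies $\cK(\Delta_P)$ for $P \in C^t(\bar{k})$ with $\cK(\Delta_Q)$ for any $Q \in C(\bar{k})$ having $\pi(Q) = \pi^t(P) = (t_0:t_1)$. In particular, by Theorem~24 of Kohel~\cite{Kohel-Indocrypt}, its defining equation in $\cK(C) \times \cK(C)$ is
\[
t_0^2(U_0V_1 + U_1V_0)^2 + t_1^2(U_0V_0 + U_1V_1)^2 = c^2\, t_0 t_1\, U_0 U_1 V_0 V_1,
\]
which is exactly the form arising in the untwisted case, and depends only on the parameters $(t_0:t_1)$ and the curve constant $c$.

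First I would note that the Montgomery endomorphism $\varphi$ preserves $\cK(\Delta_P)$, and that the algorithm developed in Kohel~\cite{Kohel-Indocrypt} for its evaluation is written entirely in terms of the input Kummer coordinates $(U_0:U_1)$, $(V_0:V_1)$ and the parameters $(t_0:t_1,c)$; no step refers to the ambient defining equations of $C$ versus $C^t$. Second, since the twisting isomorphism $\tau$ of Theorem~\ref{thm:twisted-split-mu4-curve} commutes projectively with the pair of projections defining $\pi$ (so that $\pi^t \circ \tau = \pi$ on each chart of regularity), the subvariety $\cK(\Delta_P) \subset \cK(C) \times \cK(C)$ is literally the same whether $P$ is viewed on $C$ or pulled back via $\tau^{-1}$ to~$C^t$.

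Third, I would transcribe verbatim the two variants of the Montgomery algorithm from Kohel~\cite{Kohel-Indocrypt}, corresponding to the normalisations $t_0 = 1$ or $t_1 = 1$: the first yields $4\Mul + 5\Sqr + 1\mul_t + 1\mul_c$, while the second trades one squaring for one additional constant multiplication, yielding $4\Mul + 4\Sqr + 1\mul_t + 2\mul_c$. The expected main obstacle is simply verifying that no step in the original complexity analysis tacitly invoked a curve identity specific to the untwisted model. Since the derivation proceeds on the Kummer-oriented curve $\cK(\Delta_P) \cong \PP^1$, whose equation has been exhibited above in terms of $(t_0, t_1, c)$ alone, this obstacle is vacuous, and the complexity result carries over without modification.
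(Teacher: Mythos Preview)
Your proposal is correct and follows essentially the same approach as the paper: the corollary is stated there without a formal proof, preceded only by the remark that since $\cK(\Delta_P)$ for $P\in C^t(k)$ coincides with the Montgomery-oriented curve in the untwisted case (by the preceding theorem and its dependence only on $\pi(P)=(t_0:t_1)$), the algorithm and complexity from Kohel~\cite{Kohel-Indocrypt} carry over verbatim under the normalisation $t_0=1$ or $t_1=1$. Your write-up simply makes this reasoning explicit and checks that no step of the cited algorithm depends on the ambient curve being untwisted.
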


By the same argument, the same Theorem~24 of Kohel~\cite{Kohel-Indocrypt}
provides the necessary map for point recovery in terms of the input point
$P = (s_0:s_1:s_2:s_3)$ of $C^t(k)$.

\begin{theorem}
\label{thm:mu4-Montgomery-oriented}
Let $C^t$ be an elliptic curve in twisted split $\bbmu_4$-normal
form with rational point $P = (s_0:s_1:s_2:s_3)$.
%The Montgomery-oriented curve $\cK(\Delta_S)$ in $\cK^2$ is given by
%the equation
%$$
%s_0 (U_0 V_1 + U_1 V_0)^2 + s_2 (U_0 V_0 + U_1 V_1)^2 = c (s_1 + s_3) U_0 U_1 V_0 V_1.
%$$
If $P$ is not a $2$-torsion point, the morphism $\lambda: C \rightarrow \cK(\Delta_P)$
is an isomorphism, and defined by
$$
\begin{array}{r@{\,}c@{\,}l}
\pi_1 \circ \lambda (X_0:X_1:X_2:X_3) & = &
\left\{
\begin{array}{l}
(c X_0 : X_1 + X_3),\\
(X_1 + X_3 : c X_2),
\end{array}
\right.\\[4mm]
\pi_2 \circ \lambda (X_0:X_1:X_2:X_3) & = &
\left\{
\begin{array}{l}
(s_0 X_0 + s_2 X_2 : s_1 X_1 + s_3 X_3),\\
(s_3 X_1 + s_1 X_3 : s_2 X_0 + s_0 X_2),
\end{array}
\right.
\end{array}
$$
with inverse $\lambda^{-1}((U_0:U_1),(V_0:V_1))$ equal to
\ignore{
$$
\left\{
\begin{array}{@{}l}
(
  (s_1 + s_3) U_0^2 V_0 :
  (s_0 U_0^2 + s_2 U_1^2) V_1 + c s_1 U_0 U_1 V_0 :
  (s_1 + s_3) U_1^2 V_0 :
  (s_0 U_0^2 + s_2 U_1^2) V_1 + c s_3 U_0 U_1 V_0
),\\
(
  (s_1 + s_3) U_0^2 V_1 :
  (s_2 U_0^2 + s_0 U_1^2) V_0 + c s_3 U_0 U_1 V_1 :
  (s_1 + s_3) U_1^2 V_1 :
  (s_2 U_0^2 + s_0 U_1^2) V_0 + c s_1 U_0 U_1 V_1
).
\end{array}
\right.
$$
}
%\begin{center}
%\scalebox{0.78}{

\vspace{2mm}
\noindent\hspace{-8mm}\scalebox{0.90}{
$
\left\{
\begin{array}{@{}l}
(
  (s_1 + s_3) U_0^2 V_0 :
  (s_0 U_0^2 + s_2 U_1^2) V_1 + c s_1 U_0 U_1 V_0 :
  (s_1 + s_3) U_1^2 V_0 :
  (s_0 U_0^2 + s_2 U_1^2) V_1 + c s_3 U_0 U_1 V_0
),\\
(
  (s_1 + s_3) U_0^2 V_1 :
  (s_2 U_0^2 + s_0 U_1^2) V_0 + c s_3 U_0 U_1 V_1 :
  (s_1 + s_3) U_1^2 V_1 :
  (s_2 U_0^2 + s_0 U_1^2) V_0 + c s_1 U_0 U_1 V_1
).
\end{array}
\right.
$
} % end scalebox
%\end{center}
\end{theorem}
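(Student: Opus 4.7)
The plan is to parallel the proof of Theorem~24 of Kohel~\cite{Kohel-Indocrypt}, which establishes the analogous statement for the untwisted split $\bbmu_4$-normal form, and to transfer it via the identification $\cK(C) = \cK(C^t)$ established just before the theorem. Concretely, I interpret $\lambda$ as the morphism $Q \mapsto (\pi^t(Q), \pi^t(Q-P))$, so that membership in $\cK(\Delta_P)$ is automatic and $\pi_1 \circ \lambda$ is literally the Kummer projection $\pi^t$. Its explicit formulas are inherited unchanged from $\pi$ on $C$, since the two Kummer projections coincide under the above identification.

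For $\pi_2 \circ \lambda$, I would compute $\pi^t(Q - P)$ by substituting the negation $-P = (s_0:s_3:-s_2:s_1)$ as the second argument into one of the addition laws of Theorem~\ref{thm:binary-twisted-mu4-addition-laws} (or its characteristic $\ne 2$ analogue from Section~\ref{sec:addition-algorithms}), and then post-composing with the Kummer projection. Since the projection only retains the combinations $cX_0$, $X_1 + X_3$, $X_1 - X_3$, and $cX_2$, the bi-homogeneous degree-$(2,2)$ output of the addition law collapses down to the stated bilinear pairings in $(s_j)$ and $(X_i)$; in particular the Artin--Schreier correction terms $aF, aG$ disappear under the projection since they are scalar multiples of $(X_1 + X_3)(Y_1 + Y_3)$ already captured in a single Kummer coordinate. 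The two branches of $\pi_2 \circ \lambda$ correspond to the two patches for the Kummer projection.

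The inverse $\lambda^{-1}$ is verified by direct substitution: composing with $\lambda$ and reducing modulo the defining equations of $C^t$, together with the relations obtained by applying those equations to $P$ itself (in particular $s_1^2 - s_3^2 = c^2 s_0 s_2$), should recover the identity on each patch. The two alternative expressions for $\lambda^{-1}$ are presented precisely so that their indeterminacy loci cover complementary open subsets of $\cK(\Delta_P)$. Finally, since $\lambda$ is a morphism between smooth projective curves of genus one---the target is isomorphic to $C^t$ because $P$ is not a $2$-torsion point, so the correspondence $\Delta_P \to \cK(\Delta_P)$ is birational---this birational map is automatically an isomorphism. The main obstacle will be the bookkeeping in the reduction of the degree-$(2,2)$ addition-law output down to the stated bilinear Kummer-projected form, but this computation is structurally identical to the untwisted case of Kohel~\cite{Kohel-Indocrypt}: the essential point is that the Kummer projection is blind to the Artin--Schreier twist.
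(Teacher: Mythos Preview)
Your proposal is correct and follows essentially the same approach as the paper. The paper does not give a separate proof of this theorem; instead, the sentence immediately preceding it (``By the same argument, the same Theorem~24 of Kohel~\cite{Kohel-Indocrypt} provides the necessary map for point recovery\dots'') serves as the justification, relying on exactly the identification $\cK(C) = \cK(C^t)$ that you invoke. Your write-up supplies more computational detail---the interpretation of $\lambda$ as $Q \mapsto (\pi^t(Q),\pi^t(Q-P))$, the collapse of the Artin--Schreier correction terms under the Kummer projection, and the verification of the inverse by substitution---than the paper does, but the underlying idea is the same transfer from the untwisted case.
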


This allows for the application of the Montgomery endomorphism to scalar multiplication
on $C^t$. Using the best results of the present work, the complexity is comparable to a
double and add algorithm with window of width $4$.

\begin{commentcode}
\begin{table} %[p]
\caption{Montgomery endomorphism point recovery}
\label{code:montgomery-recovery}
\begin{sageblock}
magma.eval("""
FF<c,a> := FunctionField(GF(2),2);
PP<X0,X1,X2,X3> := ProjectiveSpace(FF,3);
CC := EllipticCurve_Twisted_Split_Mu4_NormalForm(PP,c,a);
CCxCC := Domain(AdditionMorphism(CC));
PPxPP<X0,X1,X2,X3,S0,S1,S2,S3> := Ambient(CCxCC);
SS := [S0,S1,S2,S3];
P1<U0,U1> := Curve(ProjectiveSpace(FF,1));
BB_pi_1 := [[c*X0,X1+X3],[X1-X3,c*X2]];
BB_mu_1 := [[X0*S0 + X2*S2, X1*S1 + X3*S3], [X3*S1 + X1*S3, X2*S0 + X0*S2]];
pi_1 := map< CCxCC->P1 | BB_pi_1 >;
mu_1 := map< CCxCC->P1 | BB_mu_1 >;
BB_12 := [S1 cat S2 cat SS : S1 in BB_pi_1, S2 in BB_mu_1 ];
K1xK1xP3<U0,U1,V0,V1,S0,S1,S2,S3> := ProductProjectiveSpace(FF,[1,1,3]);
SS := [S0,S1,S2,S3];
KCxCC := Scheme(K1xK1xP3,[
    S0*(U0*V1+U1*V0)^2 + S2*(U0*V0 + U1*V1)^2 + c*(S1+S3)*U0*U1*V0*V1,
    (S0+S2)^2 + c^2*(S1*S3 + a*(S1+S3)^2), (S1+S3)^2 + c^2*S0*S2 ]);
BB_phi_1 := [U0^4+U1^4,c^2*U0^2*U1^2];
BB_phi_2 := [
    [(S1+S3)*(U0*V0+U1*V1)^2,c*S0*(U0*V1+U1*V0)^2],
    [c*S2*(U0*V0+U1*V1)^2,(S1+S3)*(U0*V1+U1*V0)^2]];
mont_CC := map< KCxCC->KCxCC | [ BB_phi_1 cat S2 cat SS : S2 in BB_phi_2 ] >;
lambda_CC := map< CCxCC->KCxCC | BB_12 >;
BB_21 := [ [
    (S1 + S3)*U0^2*V0,
    (S0*U0^2 + S2*U1^2)*V1 + c*S1*U0*U1*V0,
    (S1 + S3)*U1^2*V0,
    (S0*U0^2 + S2*U1^2)*V1 + c*S3*U0*U1*V0 ],
    [
    (S1 + S3)*U0^2*V1,
    (S2*U0^2 + S0*U1^2)*V0 + c*S3*U0*U1*V1,
    (S1 + S3)*U1^2*V1,
    (S2*U0^2 + S0*U1^2)*V0 + c*S1*U0*U1*V1
    ] ];
lambda_inv := map< KCxCC->CC | BB_21 >;
""")
\end{sageblock}
\end{table}
\end{commentcode}

%%%%%%%%%%%%%%%%%%%%%%%%%%%%%%%%%%%%%%%%%%%%%%%%%%%%%%%%%%%%%%%%%%%%%%%%%%%%%%%%
%%%%%%%%%%%%%%%%%%%%%%%%%%%%%%%%%%%%%%%%%%%%%%%%%%%%%%%%%%%%%%%%%%%%%%%%%%%%%%%%
\section{Conclusion}
\label{sec:conclusion}
%%%%%%%%%%%%%%%%%%%%%%%%%%%%%%%%%%%%%%%%%%%%%%%%%%%%%%%%%%%%%%%%%%%%%%%%%%%%%%%%
%%%%%%%%%%%%%%%%%%%%%%%%%%%%%%%%%%%%%%%%%%%%%%%%%%%%%%%%%%%%%%%%%%%%%%%%%%%%%%%%

Elliptic curves in the twisted $\bbmu_4$-normal form of this article
(including split and semisplit variants) provide models for curves
which, on the one hand, are isomorphic to twisted Edwards curves with
efficient arithmetic over nonbinary fields, and, on the other,
have good reduction and efficient arithmetic in characteristic~$2$.

Taking the best reported algorithms from the EFD~\cite{EFD},
we conclude with a tabular comparison of the previously best known
complexity results for doubling and addition algorithms on
projective curves.
We include the projective lambda model (a~singular quartic model in $\PP^2$),
which despite the extra cost of doubling, admits a slightly better algorithm
for addition than L\'opez-Dahab (see~\cite{OLAR-Lambda}).
Binary Edwards curves~\cite{BinaryEdwards}, like the twisted $\bbmu_4$-normal
form of this work, cover all ordinary curves, but the best
complexity result we give here is for $d_1 = d_2$ which has a
rational $4$-torsion point (corresponding to the trivial twist,
for which the $\bbmu_4$-normal form gives better performance).
Similarly, the L\'opez-Dahab model with $a_2=0$ admits a rational
$4$-torsion point, hence covers the same classes, but the fastest
arithmetic is achieved on the quadratic twists with $a_2=1$, which
manage to save one squaring $\Sqr$ for doubling relative to the present
work, at the loss of generality (one must vary the weighted
projective space according to the twist, $a_2 = 0$ or $a_2 = 1$)
and with a large penalty for the cost of addition.
The results stated here concern the twisted $\bbmu_4$-normal form which minimize
the constant multiplications.  In the final columns, we indicate the fractions
of ordinary curves covered by the model (assuming a binary field of odd degree),
and whether the family includes the NIST curves.
\vspace{2mm}

\begin{center}
\begin{tabular}{|@{\;}c|@{\;}l|@{\;}l|@{\;}c|@{\;}c|}
\hline
Curve model    & Doubling & Addition & $\%$ & NIST\\
\hline
% Hessian                & $4\Mul + 7\Sqr + O(\mul)$ & $12\Mul$ & 12.5\% & NO\\
Lambda coordinates     & $3\Mul + 4\Sqr + 1\mul$     & $11\Mul + 2\Sqr$ & 100\% & \cmark\\
Binary Edwards ($d_1=d_2$) & $2\Mul + 5\Sqr + 2\mul$ & $16\Mul + 1\Sqr + 4\mul$ &\ 50\% & \xmark\\
L\'opez-Dahab ($a_2=0$)  & $2\Mul + 5\Sqr + 1\mul$ & $14\Mul + 3\Sqr$  &\ 50\% & \xmark \\
L\'opez-Dahab ($a_2=1$)  & $2\Mul + 4\Sqr + 2\mul$ & $13\Mul + 3\Sqr$  &\ 50\% & \cmark \\
Twisted $\bbmu_4$-normal form  & $2\Mul + 5\Sqr + 2\mul$ &\;\;$9\Mul + 2\Sqr$ & 100\% & \cmark \\
$\bbmu_4$-normal form  & $2\Mul + 5\Sqr + 2\mul$ &\;\;$7\Mul + 2\Sqr$  &\ 50\% & \xmark \\
\hline
\end{tabular}\\
\vspace{4mm}
Table of binary doubling and addition algorithm complexities.
\end{center}
\vspace{2mm}

All curves can be represented in lambda coordinates or in $\bbmu_4$-normal form.
However by considering the two cases $a_2 \in \{0,1\}$, as for the L\'opez-Dahab
models, the twists of the $\bbmu_4$-normal form with $a_2 = 0$ give the faster
$\bbmu_4$-normal form and only when $a_2 = 1$ does one need the twisted model with
its reduced complexity.

By consideration of twists, we are able to describe a uniform family
of curves which capture nearly optimal known doubling performance of
binary curves (up to $1\Sqr$), while vastly improving the performance
of addition algorithms applicable to all binary curves.
By means of a trivial encoding in twisted $\bbmu_4$-normal form
(see Corollary~\ref{cor:binary-weierstrass}), this brings efficient
arithmetic of these $\bbmu_4$-normal forms to binary NIST curves.

\newpage

\end{document}